\tikzstyle{vertex}=[circle,draw=black,fill=black,inner sep=0,minimum size=3pt,text=white,font=\footnotesize]
\newtheorem{theorem}{Theorem}
\newtheorem{claim}[theorem]{Claim}
\newtheorem{lemma}[theorem]{Lemma}
\newtheorem{definition}{Definition}
\newtheorem{conjecture}[theorem]{Conjecture}
\newcommand{\abs}[1]{\left\lvert{#1}\right\rvert}
\DeclareMathOperator{\ex}{ex}
	\title{
	Bipartite Tur\'an problems for ordered graphs
}
	\date{}
\author{Abhishek Methuku \thanks{\'Ecole Polytechnique F\'ed\'erale de Lausanne, Switzerland and Discrete Mathematics Group, Institute for Basic Science (IBS), Daejeon, Republic of Korea. Research partially supported by Swiss National Science Foundation grants no. 200020-162884 and 200021-175977, and by IBS-R029-C1. e-mail: \texttt{abhishekmethuku@gmail.com}}
	\qquad Istv\'an Tomon \thanks{\'Ecole Polytechnique F\'ed\'erale de Lausanne, Switzerland. Research partially supported by Swiss National Science Foundation grants no. 200020-162884 and 200021-175977. e-mail: \texttt{istvantomon@gmail.com}}}
\begin{document}



	\maketitle
	
\begin{abstract}
	A zero-one matrix $M$ contains a zero-one matrix $A$ if one can delete some rows and columns of $M$, and turn some 1-entries into 0-entries such that the resulting matrix is $A$.	The extremal number of $A$, denoted by $\ex(n,A)$, is the maximum number of $1$-entries in an $n\times n$ sized matrix $M$ that does not contain $A$. 
	
	A matrix $A$ is column-$t$-partite (or row-$t$-partite), if it can be cut along the columns (or rows) into $t$ submatrices such that every row (or column) of these submatrices contains at most one $1$-entry. We prove that if $A$ is column-$t$-partite, then $\ex(n,A)<n^{2-\frac{1}{t}+\frac{1}{2t^{2}}+o(1)}$, and if $A$ is both column- and row-$t$-partite, then $\ex(n,A)<n^{2-\frac{1}{t}+o(1)}$. Our proof combines a novel density-increment-type argument with the celebrated dependent random choice method.
	
	Results about the extremal numbers of zero-one matrices translate into results about the Tur\'an numbers of bipartite ordered graphs. In particular, a zero-one matrix with at most $t$ 1-entries in each row corresponds to a bipartite ordered graph with maximum degree $t$ in one of its vertex classes. Our results are partially motivated by a well known result of F\"uredi (1991) and Alon, Krivelevich, Sudakov (2003) stating that if $H$ is a bipartite graph with maximum degree $t$ in one of the vertex classes, then $\ex(n,H)=O(n^{2-\frac{1}{t}})$. The aim of the present paper is to establish similar general results about the extremal numbers of ordered graphs.
\end{abstract}
	
\section{Introduction} 

If $H$ is a graph, the \emph{extremal number} (or \emph{Tur\'an number}) of $H$ is
 $$\ex(n,H)=\max \{|E(G)|: |V(G)|=n, G\mbox{ does not contain }H\mbox{ as a subgraph}\}.$$
 By the classical Erd\H{o}s-Stone theorem \cite{ES46}, we have
 $$\ex(n,H)=\left(1-\frac{1}{\chi(H)-1}+o(1)\right)\binom{n}{2}.$$
 Therefore, the asymptotic value of $\ex(n,H)$ is well understood unless $H$ is bipartite. The value of $\ex(n,H)$ for bipartite graphs $H$ is the subject of extensive study. Recently, there were some interesting new results on this topic, see for example \cite{CL18,J18,GJN19}. Below, we discuss a number of well known results concerning the extremal numbers of bipartite graphs, but first, let us introduce the concept of ordered graphs. 
 
 An \emph{ordered graph} is a pair $(G,<)$, where $G$ is graph and $<$ is a total ordering on the vertex set of $G$. If $<$ is clear from the context, we shall write simply $G$ instead of $(G,<)$. The ordered graph $(H,<')$ is an \emph{ordered subgraph} of $(G,<)$ if there exists an order preserving embedding from $V(H)$ to $V(G)$ that preserves edges. The extremal number of the ordered graph $(H,<')$ is 
 $$\ex_{<}(n,(H,<'))=\max \{|E(G)|: |V(G)|=n, (G,<)\mbox{ does not contain }(H,<')\mbox{ as an ordered subgraph}\}.$$
 The \emph{interval chromatic number} of an ordered graph $H$, denoted by $\chi_{<}(H)$, is the minimum number of colors needed to color the vertices of $H$ such that there are no monochromatic edges, and each color class is an interval with respect to the ordering on $V(H)$. The systematic study of ordered graph was initiated by Pach and Tardos in \cite{PT06}, where the following analogue of the Erd\H{o}s-Stone theorem is established:
  $$\ex_{<}(n,H)=\left(1-\frac{1}{\chi_{<}(H)-1}+o(1)\right)\binom{n}{2}.$$
 Therefore, the asymptotic value of $\ex_{<}(n,H)$ is known unless $\chi_{<}(H)=2$. Say that an ordered graph $H$ is \emph{ordered bipartite} if $\chi_{<}(H)=2$. 
 
In case $\chi_{<}(H)=2$, $\ex_{<}(n,H)$ is strongly related to the extremal number of the bi-adjacency matrix of $H$. This extremal number is defined as follows. If $A$ and $M$ are zero-one matrices, we say that $M$ \emph{contains} $A$ if we can delete rows and columns of $M$, and turn $1$-entries of $M$ into $0$-entries such that the resulting matrix is $A$. The \emph{weight} of a zero-one matrix $M$, denoted by $w(M)$, is the number of $1$-entries of $M$. The \emph{extremal number} of the zero-one matrix $A$ is
$$\ex(n,A)=\max \{w(M): M\mbox{ is an }n\times n\mbox{ sized matrix that does not contain }A\}.$$
If $H$ is an ordered bipartite graph with vertex classes $X$ and $Y$, then $H$ naturally corresponds to the zero-one matrix $A_{H}$, whose rows correspond to the elements of $X$, whose columns correspond to the elements of $Y$, and $A_{H}(x,y)=1$ if and only if $xy\in E(H)$. The following connection between the extremal number of $H$ and $A_{H}$ was established by Pach and Tardos \cite{PT06}:
$$\ex(n,A_{H})\leq \ex_{<}(2n,H)= O(\ex(n,A_{H})\log n),$$
and if $\ex(n,A_{H})\geq n^{1+\epsilon}$ for some $\epsilon>0$, then
$$\ex(n,A_{H})=\Theta(\ex_{<}(n,H)).$$
But this tells us that the order of magnitude of $\ex_{<}(n,H)$ and $\ex(n,A_{H})$ are roughly the same, so it is enough to consider one of these extremal numbers. We found that it is easier work with the matrix terminology, so in the rest of our paper, we will use the language of zero-one matrices instead of ordered bipartite graphs.

Say that a matrix is \emph{acyclic} if the corresponding graph is a forest. The extremal number of acyclic matrices is extensively studied, see for example \cite{FH92,KTTW19,P11,PT06,T05}. It was conjectured by F\"uredi and Hajnal \cite{FH92} that if $A$ is a zero-one matrix that corresponds to an ordered forest, then $\ex(n,A)=O(n\log n)$. This was disproved in a weak sense by Pettie \cite{P11}, and the conjecture of F\"uredi and Hajnal is replaced by the following conjecture of Pach and Tardos \cite{PT06}, which is still open: $\ex(n,A)=n(\log n)^{O(1)}$. The most general result connected to this conjecture is due to Kor\'andi, Tardos, Tomon, and Weidert \cite{KTTW19}, who prove that $\ex(n,A)=n^{1+o(1)}$ holds for a large class of acyclic matrices $A$.

However, there are much fewer results about the extremal number of matrices, whose underlying graph is not acyclic. F\"uredi and Hajnal \cite{FH92} conjectured that if $G$ is an ordered bipartite graph and $G'$ is the underlying unordered bipartite graph, then the extremal numbers of $G$ and $G'$ do not differ by much, in particular $\ex_{<}(n,G)=O(\ex(n,G')\log n)$. This was disproved in a strong sense by Pach and Tardos \cite{PT06}: there exist ordered bipartite cycles of arbitrary even length, whose extremal number is at least $\Omega(n^{\frac{4}{3}})$; we discuss this result in more detail in Section \ref{sect:cycles} (where we also prove new bounds for a large class of ordered cycles). On the other hand, by the well known result of Bondy and Simonovits \cite{BS74}, we have $\ex(n,C_{2k})=O(n^{1+\frac{1}{k}})$, where $C_{2k}$ denotes the cycle of length $2k$. More recently, extremal numbers of some ordered cycles were studied by Gy\H{o}ri, Kor\'andi, Methuku, Tomon, Tompkins, and Vizer \cite{GyKMTTV18}.

The aim of the present paper is to provide general upper bounds on the extremal numbers of zero-one matrices that are not acyclic.

A matrix $A$ is \emph{column-$t$-partite} (or \emph{row-$t$-partite}), if it can be cut along the columns (or rows) into $t$ submatrices such that every row (or column) of these matrices contains at most one $1$-entry. Also, a matrix is \emph{$t\times s$-partite} if $A$ is both row-$t$-partite and column-$s$-partite. See Figure \ref{Examplesofrowandcolumnpartite} for examples.

\begin{figure}
	\label{Examplesofrowandcolumnpartite}
\[  
\left(
\begin{array}{cc|cc}
0 & 1 & 0 & 1 \\ 
1 & 0 & 0 & 1 \\ 
1 & 0 & 0 & 1 \\ 
0 & 1 & 1 & 0
\end{array}
\right), 
\left(
\begin{array}{cccc}
0 & 1 & 0 & 0 \\ 
1 & 0 & 1 & 1 \\ \hline
1 & 0 & 1 & 0 \\ 
0 & 1 & 0 & 1
\end{array}
\right),
\left(
\begin{array}{cc|cc}
0 & 1 & 0 & 1 \\ 
1 & 0 & 1 & 0 \\ \hline
1 & 0 & 1 & 0 \\ 
0 & 1 & 0 & 1
\end{array}
\right)
\]
\caption{Examples of a column-$2$-partite (left), row-$2$-partite (middle), and a $2 \times 2$-partite matrix (right).}
\end{figure}

Certain column-1-partite matrices  $A$ are related to Davenport-Schinzel sequences \cite{DS65}, and  satisfy $\ex(n,A)=\Theta(n\alpha(n))$, where $\alpha(n)$ is the extremely slowly growing inverse-Ackermann function \cite{FH92}. On the other hand, it follows from the result of Kor\'andi et al. \cite{KTTW19} that for every column $1$-partite matrix $A$, we have $\ex(n,A)=n^{1+o(1)}$. The $1\times 1$-partite matrices are exactly the permutation matrices, and by the celebrated result of Marcus and Tardos \cite{MT04}, if $A$ is a permutation matrix, then $\ex(n,A)=O(n)$. Column-2-partite matrices correspond to subgraphs of $1$-subdivisions of bipartite multigraphs, see \cite{CL18,J18} for related results in the unordered case, and $2\times 2$-partite matrices correspond to subgraphs of disjoint unions of cycles. 

We prove the following general result about the extremal numbers of column-$t$-partite matrices. (Note that due to symmetry the analogous result for row-$t$-partite matrices also holds.)

\begin{theorem}
	\label{thm:mainthm}
Let $t \ge 2$ be an integer and let $A$ be a column-$t$-partite zero-one matrix. Then $$\ex(n, A) <n^{2 - \frac{1}{t}+\frac{1}{2t^{2}}+o(1)}.$$
\end{theorem}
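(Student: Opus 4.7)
My plan is to argue by contradiction: assume that $M$ is an $n \times n$ zero-one matrix with $w(M) \geq n^{2-1/t+1/(2t^2)+c}$ for some constant $c > 0$, and that $M$ does not contain $A$. The approach combines \emph{dependent random choice} (DRC) with an iterative \emph{density-increment} dichotomy, as suggested by the abstract.

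After a standard dyadic pigeonhole to regularize row and column degrees (up to polylogarithmic factors), the first step is to partition the columns of $M$ into $t$ consecutive intervals $C_1 < \cdots < C_t$ of equal length $n/t$ matching the blocks $A_1,\ldots,A_t$ of $A$, and similarly to partition the rows of $M$ into $k$ consecutive intervals $R_1 < \cdots < R_k$ of size $n/k$, where $k$ is the number of rows of $A$. An order-preserving embedding of $A$ into $M$ then decomposes into (i) order-preserving column embeddings $\phi_i \colon A_i \to C_i$, and (ii) a row embedding that places the $r$-th row of $A$ into $R_r$; order-preservation becomes a local, interval-by-interval condition. The second step is to apply DRC within each block $C_i$ to produce column subsets $U_i \subseteq C_i$ of polynomial size such that every transversal $(c_1,\ldots,c_t) \in U_1 \times \cdots \times U_t$ has a common row-neighborhood that meets every $R_r$; from such $U_i$'s one embeds $A$ greedily, first placing the columns in the $U_i$ (in order) and then filling in rows using common neighborhoods (again in order).

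The classical DRC threshold for producing such common neighborhoods requires weight at least of order $n^{2-1/t}$, but we only have $n^{2-1/t+1/(2t^2)+c}$, which is a priori insufficient to guarantee one common neighbor in \emph{every} row-interval. This gap is closed by the density-increment: if DRC on the current matrix fails -- either because common neighborhoods are concentrated in few row-intervals, or because the $1$-entries are concentrated in a sub-block of some $C_i$ -- then one passes to a sub-matrix $M' \subseteq M$ of dimensions $n' \times n'$ whose density relative to the exponent $2-1/t+1/(2t^2)$ is strictly greater than that of $M$. Iterating this dichotomy at most $O_t(1)$ times, the density must eventually cross the classical DRC threshold, at which point DRC succeeds and yields an order-preserving copy of $A$, contradicting the assumption.

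The main obstacle is the quantitative calibration of the density-increment step: each iteration shrinks $n$ to some $n' < n$ and must gain density in a compensating way, with the condition that the net improvement in normalized density is strictly positive at every step. The exponent $2-1/t+1/(2t^2)$ in the theorem is exactly the largest value for which this dichotomy can be closed with $O_t(1)$ many density-increment iterations, and the polylogarithmic errors accumulated throughout are what produce the final $n^{o(1)}$ slack.
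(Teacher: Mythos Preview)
Your proposal captures the right keywords (dependent random choice, density increment) but misses the actual mechanism, and several concrete claims are wrong.

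First, the iteration is not $O_t(1)$ steps. In the paper the density-increment runs for $z=\log_k n$ rounds: at each step the matrix shrinks from $m\times n$ to $(m/k)\times n$ (only the row dimension shrinks; the columns are never pre-partitioned into $t$ blocks), and $k$ is a large constant depending on $\epsilon,r,s$, not the number of rows of $A$. Your $O_t(1)$ claim cannot work: to bridge the gap between the starting density $n^{2-1/t+1/(2t^2)+c}$ and the classical DRC threshold $n^{2-1/t}$ requires a polynomial-in-$n$ density gain, which needs $\Theta(\log n)$ constant-factor increments.

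Second, and more importantly, you give no mechanism for the specific exponent $1/(2t^2)$. The paper's simpler argument (its Theorem~3) already gives $n^{2-1/t+1/t^2+o(1)}$ by tracking the number of copies of $K_{t,t}$ through the iteration. The improvement to $1/(2t^2)$ is the non-trivial part: it comes from tracking copies of $K_{u,t}$ where $u$ \emph{varies} with the iteration index, together with a ``stepping-up'' lemma that converts a surplus of $K_{u,t}$'s into a surplus of $K_{u+1,t}$'s at carefully chosen jump points $\lambda_u$. The value $1/(2t^2)$ falls out of optimizing the jump schedule. Your sketch contains no analogue of this; a direct DRC/increment dichotomy of the kind you describe would at best reproduce the $1/t^2$ bound, not $1/(2t^2)$.

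Finally, the heart of the increment step is not ``DRC fails $\Rightarrow$ pass to a denser square submatrix''. The paper defines a $t$-uniform hypergraph on the columns, calls an edge \emph{heavy} if its common row-neighborhood meets at least $r$ row-blocks, and shows (via an ordered hypergraph Tur\'an lemma) that many heavy edges force a copy of $A$. Hence most $K_{u,t}$'s are \emph{light}, meaning their rows live in fewer than $r$ blocks, and a pigeonhole then concentrates them in one block. Your proposal does not isolate this dichotomy, and without it there is no reason the density-increment step should succeed.
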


 As usual, let $K_{s,t}$ denote the complete bipartite graph with vertex classes of sizes $s$ and $t$. By the K\H{o}v\'ari-S\'os-Tur\'an theorem \cite{KST54}, we have $\ex(n,K_{s,t})=O(n^{2-\frac{1}{t}})$, and  by a construction of Alon, R\'onyai, and Szab\'o \cite{ARSz99} if $s\geq (t-1)!$, then $\ex(n,K_{s,t})=\Theta(n^{2-\frac{1}{t}})$.  So if $A$ is the $s\times t$ sized all-1 matrix and $s\geq (t-1)!$, then $A$ is column-$t$-partite and $\ex(n,A)=\Theta(n^{2-\frac{1}{t}})$. This shows that the exponent $2-\frac{1}{t}+\frac{1}{2t^2}+o(1)$ in Theorem \ref{thm:mainthm} cannot be replaced by anything smaller than $2-\frac{1}{t}$. We conjecture that the latter exponent is the truth, i.e., $\ex(n,A)<n^{2-\frac{1}{t}+o(1)}.$ We can prove this conjecture in the case when $A$ is $t\times t$-partite. 

\begin{theorem}
	\label{thm:symmetric}
Let $t \ge 2$ be an integer and let $A$ be a $t\times t$-partite zero-one matrix. Then $$\ex(n, A) <n^{2 - \frac{1}{t}+o(1)}.$$
\end{theorem}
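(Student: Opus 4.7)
My plan is to adapt the density-increment plus dependent-random-choice (DRC) framework behind Theorem~\ref{thm:mainthm}, making symmetric use of both the row- and column-partitions of $A$ in order to eliminate the $\tfrac{1}{2t^{2}}$ slack. Fix $\varepsilon>0$; the goal is to show that any $n\times n$ zero-one matrix $M$ with $w(M)\ge n^{2-1/t+\varepsilon}$ contains $A$ for all sufficiently large $n$. Let $R_1,\dots,R_t$ and $C_1,\dots,C_t$ be the row- and column-partitions witnessing that $A$ is $t\times t$-partite, and write $r_i=|R_i|$, $c_j=|C_j|$, and $b=\max_j c_j$.

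I would first apply a density-increment regularization: partition the rows of $M$ into $t$ consecutive equal intervals $I_1,\dots,I_t$ and the columns into $J_1,\dots,J_t$; if some block $I_i\times J_j$ has weight noticeably above the average $w(M)/t^{2}$, restrict to it and restart with a strictly larger effective exponent. Since the exponent is bounded by $2$, this iteration terminates in $O(1)$ many steps, after which one may assume that on some $n'\times n'$ sub-matrix $M'$ (with $n'$ polynomially close to $n$) every one of the $t^{2}$ blocks has weight $\Theta(m)$ for some $m\asymp(n')^{2-1/t+\varepsilon'}/t^{2}$ with $\varepsilon'\ge\varepsilon$. Then the heart of the argument is a blocked DRC step: sample $\gamma_j\in J_j$ independently and uniformly at random for each $j\in[t]$, and for each $i\in[t]$ set
\[
T_i=\{\,r\in I_i:M'(r,\gamma_j)=1\text{ for every }j\in[t]\,\}.
\]
A convexity/Jensen calculation in the balanced-block regime should give $\mathbb E[|T_i|]\ge (n')^{\delta}$ for some $\delta=\delta(\varepsilon)>0$, and the expected number of rows $r\in T_i$ that are ``bad''---those for which, for some choice of one row from each of the other $T_{i'}$, the common column-neighborhood in some $J_{j'}$ is smaller than $b$---should be negligible whenever $\varepsilon>0$. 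Fixing a good tuple $(\gamma_1,\dots,\gamma_t)$ and deleting the bad rows would leave sets $W_i\subseteq T_i$ with $|W_i|\ge r_i$ such that any selection of at most one row from each $W_i$ has $\ge b$ common column-neighbors inside every~$J_j$.

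With that in hand the embedding is immediate: map $R_i$ into the first $r_i$ rows of $W_i$ in order, and then embed each $C_j$ into $J_j$ greedily, using that the row-$t$-partiteness of $A$ forces every column $c\in C_j$ to have its $1$-entries in at most one row of each $R_i$, so its image only needs $1$-entries in at most one row of each $W_i$, and the DRC conclusion supplies $\ge c_j$ candidate columns in $J_j$ in which to place the next column in order. The main obstacle I anticipate is the DRC calibration: the reason the $t\times t$-partite setting avoids the $\tfrac{1}{2t^{2}}$ loss of Theorem~\ref{thm:mainthm} is that $t$ random columns are \emph{exactly} enough to control a single column of $A$ (which has $\le t$ $1$-entries by row-$t$-partiteness), whereas in the column-$t$-partite-only case a column of $A$ can have many more $1$-entries and the DRC must sample more aggressively, losing a factor in the exponent. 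Matching the $t$-fold DRC sampling to the $t$-fold row-block structure cleanly---so that the bad-row count and the lower bound on $|T_i|$ both behave as needed---is where I expect the technical difficulty to lie.
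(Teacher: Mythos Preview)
Your approach has a genuine gap at the column-embedding step. The claim that the columns of $C_j$ can be placed into $J_j$ ``greedily\ldots in order'', given only that each relevant transversal of rows has at least $b$ common neighbours in $J_j$, is false in the ordered setting. Take $t=2$, $c_j=2$, and suppose the two columns $col_1<col_2$ of $C_j$ have, after the row embedding, disjoint row-supports $\{w_1,w_2\}$ and $\{w_1',w_2'\}$; it is perfectly consistent with your DRC conclusion that the common neighbours of $\{w_1,w_2\}$ in $J_j$ are $\{10,20\}$ while those of $\{w_1',w_2'\}$ are $\{5,8\}$, and then no choice $\gamma_1<\gamma_2$ exists. ``At least $c_j$ candidates each'' yields a system of distinct representatives, but not an \emph{ordered} one, and raising $b$ to any constant does not help. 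This is precisely the obstacle separating the matrix problem from the unordered Alon--Krivelevich--Sudakov argument you are importing. (There is a secondary issue: your balancing step does not terminate in $O(1)$ rounds, since a constant-factor weight imbalance only multiplies the density by a constant and does not increment the exponent.)

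The paper avoids any direct greedy embedding. It runs a density increment on the \emph{number of $K_{t,t}$ copies}, not on weight: Lemma~\ref{lemma:densityincrement} is applied once along rows (using column-$t$-partiteness) and once along columns (using row-$t$-partiteness) to give Lemma~\ref{lemma:densityincrementsymmetric}, which says that if $M$ avoids $A$ then some $\tfrac{n}{k}\times\tfrac{n}{k}$ block retains a constant fraction of all $K_{t,t}$ copies; iterating this forces the $K_{t,t}$ count in a shrinking submatrix above what its size permits. The ordered embedding is buried inside Lemma~\ref{lemma:densityincrement}: when too many column-$t$-tuples span at least $r$ row-blocks, Lemma~\ref{lemma:t-partite} produces an \emph{ordered} complete $t$-partite sub-hypergraph on the column indices, and that structure---not a common-neighbourhood lower bound---is what embeds $A$. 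Your explanation of why the $\tfrac{1}{2t^2}$ vanishes is also not the operative one: the saving comes from shrinking both dimensions at once, so that the hypothesis $N>C n^t$ of Lemma~\ref{lemma:densityincrement} becomes $N_i>C(n/k^i)^t$ at step $i$, which only requires $N\gtrsim n^{t+\epsilon}$ initially and hence $w(M)\gtrsim n^{2-1/t+\epsilon}$.
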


This theorem is sharp up to the $o(1)$ term assuming the following well known conjecture is true: $\ex(n,K_{t,t})=\Theta(n^{2-\frac{1}{t}})$. Let us highlight some of the unordered analogues of Theorem \ref{thm:mainthm} and Theorem \ref{thm:symmetric}. If $A$ is column-$t$-partite, then the corresponding ordered bipartite graph has maximum degree $t$ in one of its vertex classes. A well known result of F\"uredi \cite{F91}, reproved by Alon, Krivelevich, and Sudakov \cite{AKS03} applying the celebrated dependent random choice method, is the following: If $H$ is an (unordered) bipartite graph with maximum degree $t$ in one of its vertex classes, then $\ex(n,H)=O(n^{2-\frac{1}{t}})$. This result served as one of our main motivations for studying the problems above. Bipartite graphs with maximum degree $t$ in one of the vertex classes correspond to zero-one matrices in which every row has at most $t$ $1$-entries. Even though our proof method of Theorem \ref{thm:mainthm} works for a large class of matrices, it does not work for every such matrix. It would be interesting to decide whether Theorem \ref{thm:mainthm} can be extended to matrices with at most $t$ $1$-entries in every row.

Our paper is organized as follows. In Section \ref{sect:thm1}, we prove a somewhat weaker version of Theorem \ref{thm:mainthm}, while introducing our proof method and the main tools used in the paper. Then, we prove Theorem \ref{thm:symmetric} in Section \ref{sect:txt}, and Theorem \ref{thm:mainthm} in Section \ref{sect:mainthm}. In Section \ref{sect:cycles}, we prove that $\ex(n,A)=O(n^{\frac{3}{2}})$ holds for a large number of matrices $A$ that correspond to cycles. We finish our paper with some remarks and open questions in Section \ref{sect:remarks}. 

\subsection{Preliminaries}

We use the following extended definition of the binomial coefficients. If $x\in\mathbb{R}$ and $k\in \mathbb{Z}^{+}$, then
$$\binom{x}{k}=\begin{cases}\frac{x(x-1)\dots(x-k+1)}{k!} & \mbox{ if } x\geq k-1,\\
	0 & \mbox{ otherwise.}\end{cases}$$
We will use the following properties of this binomial coefficient.
\begin{itemize}
	\item For every $k\in\mathbb{Z}^{+}$, the function $f(x)=\binom{x}{k}$ is continuous and convex.
	\item $\binom{x}{k}\leq \frac{x^{k}}{k!}<x^{k}.$
	\item If $x\geq k$, then $\binom{x}{k}\geq (\frac{x}{k})^{k}$.
\end{itemize}

\section{Column-$t$-partite matrices}\label{sect:thm1}

In this section, we prove the following slightly weaker version of Theorem \ref{thm:mainthm}, while outlining the main ideas used in our paper.

\begin{theorem}\label{thm:simplerversion}
Let $t\geq 2$ and let $A$ be a column-$t$-partite matrix. Then
$$\ex(n,A)<n^{2-\frac{1}{t}+\frac{1}{t^2}+o(1)}.$$
\end{theorem}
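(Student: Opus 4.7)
Suppose, for contradiction, that $M$ is an $n \times n$ zero-one matrix with $w(M) \geq n^{2 - 1/t + 1/t^2 + \varepsilon}$ that does not contain $A$, where $\varepsilon>0$ is a fixed small constant. Write $A = [A_1 \mid A_2 \mid \cdots \mid A_t]$ with each $A_i$ column-$1$-partite, and let $r$ denote the number of rows of $A$ and $c_i$ the number of columns of $A_i$. My plan is a density-increment argument driven by dependent random choice (DRC), exploiting the Kor\'andi--Tardos--Tomon--Weidert fact that each $A_i$, being column-$1$-partite, satisfies $\ex(n,A_i)=n^{1+o(1)}$, so each single block is essentially linear-threshold; the challenge is to find them coordinated.

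\emph{Structural cleanup and DRC.} Using a dyadic/averaging argument I would partition the columns of $M$ into $t$ consecutive blocks $C_1,\ldots,C_t$ of approximately equal width, and pass to a large set of rows $X$ (of size $n^{1-o(1)}$) such that every $x\in X$ has at least $d := n^{1 - 1/t + 1/t^2 + \varepsilon - o(1)}$ ones in each block $C_i$. Within each block $C_i$, I would then apply DRC by sampling a random $c_i$-subset of columns of $C_i$ and considering its common row-neighborhood inside $X$; by convexity of $\binom{\cdot}{c_i}$ and the lower bound on $d$, this neighborhood has large expected size, producing a large set $Y_i \subseteq X$ of candidate rows together with $c_i$ distinguished columns in $C_i$ on which every row of $Y_i$ has a $1$.

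\emph{Simultaneous embedding via density increment.} The heart of the argument is to embed $A_1,\ldots,A_t$ simultaneously into $C_1,\ldots,C_t$ using a common increasing row sequence $x_1<\cdots<x_r$. I would proceed block by block: having embedded the first $i-1$ blocks along some partial row sequence, if that sequence cannot be extended to embed $A_i$ inside $C_i$ in an order-compatible manner, then by a counting argument one can locate a sub-matrix $M' \subseteq M$ on a smaller ground set whose relative density has been boosted by a factor $n^{\delta}$ for some $\delta = \delta(t) > 0$. Recursing on $M'$, the density cannot exceed $1$, so the iteration must terminate with a successful embedding of $A$ in $M$, contradicting the assumption.

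The main obstacle --- and the source of the extra $1/t^2$ loss relative to the conjectured exponent $2-1/t$ --- is this coordination step: ordering forces us to use the same increasing row sequence for all $t$ blocks, which is much more restrictive than embedding the blocks independently. Balancing the polynomial cost per density-increment iteration against the total density gain needed to bring $n^{-1/t+1/t^2+\varepsilon}$ above $1$ is what pins the exponent at $2-1/t+1/t^2$, and sharpening the counting in the increment step (so that each iteration is more efficient) is presumably what lets the authors later save an additional factor of $2$ to obtain Theorem~\ref{thm:mainthm}.
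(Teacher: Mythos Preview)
Your proposal has a genuine gap: the density-increment step, which is the heart of the argument, is asserted but not described. You write that ``if that sequence cannot be extended to embed $A_i$ inside $C_i$ in an order-compatible manner, then by a counting argument one can locate a sub-matrix $M'\subseteq M$ on a smaller ground set whose relative density has been boosted by a factor $n^{\delta}$'' --- but \emph{which} counting argument, and boosted relative to \emph{what} quantity? In the unordered AKS proof no increment is needed: DRC alone produces a large common neighbourhood into which $H$ embeds directly. Here the ordering obstructs that, and your outline does not say what structural object is being counted, why failure to extend the embedding forces that count to concentrate somewhere, or what the shape of $M'$ is. Your block-by-block plan is also problematic as stated: the row sequence $x_1<\cdots<x_r$ is shared across all $t$ column blocks, so one cannot ``embed $A_1,\ldots,A_t$ block by block'' extending a partial row sequence --- the rows must be chosen globally.

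The paper's argument is structurally quite different, and the difference is instructive. It partitions the \emph{rows} (not the columns) of $M$ into $k$ equal horizontal blocks, and the density parameter it tracks is not edge density but the number $N$ of copies of $K_{t,t}$ in $M$. The key lemma shows that if $M$ avoids $A$ and $N$ is large, then some single horizontal block already contains a $\Theta(1/k)$ fraction of all $K_{t,t}$ copies. The mechanism is a light/heavy dichotomy on $t$-tuples of columns: a $t$-tuple is \emph{heavy} if rows with $1$'s in all $t$ positions occur in at least $r$ distinct horizontal blocks. An ordered hypergraph Tur\'an bound (forbidding a complete $t$-partite subhypergraph with interval parts) shows that heavy $t$-tuples are rare, because many heavy $t$-tuples with the same $r$-set of blocks would let one place each row of $A$ in its own block and read off $A$. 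Hence most $K_{t,t}$'s sit on light $t$-tuples, whose contributing rows lie in fewer than $r$ blocks, and convexity forces many of those $K_{t,t}$'s to be ``narrow'' (contained in a single block). Iterating this on $\frac{n}{k^i}\times n$ submatrices, the $K_{t,t}$ count decays only like $k^{-(1+\varepsilon)i}$ while the matrix shrinks, eventually contradicting the trivial upper bound. The exponent $2-\tfrac{1}{t}+\tfrac{1}{t^2}$ enters because one needs $w(M)^{t^2}/n^{2t^2-2t}\gtrsim n^{t+1}$ initially, which is exactly $w(M)\gtrsim n^{2-1/t+1/t^2}$. None of this uses the KTTW near-linear bound for column-$1$-partite matrices.
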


In the rest of this section, $A$ is an $r\times s$ sized matrix, and we fix a real number $\epsilon>0$. Also, set $k = \left\lceil {(\frac{4 r^{t-1} t^t}{ t!})}^{\frac{1}{\epsilon}} \right\rceil$.  We show that $\ex(n,A)<n^{2-\frac{1}{t}+\frac{1}{t^2}+\epsilon}$ if $n$ is sufficiently large given the parameters $\epsilon,r,s,k$.

A \emph{copy of $K_{u,t}$} in a zero-one matrix $M$ is a submatrix of $M$ induced by $u$ distinct rows and $t$ distinct columns such that every entry in the submatrix is 1. 

\subsection{Overview of the proof}

Let us briefly outline our proof. Let $M$ be an $n\times n$ sized matrix of weight at least $n^{2-\frac{1}{t}+\frac{1}{t^2}+\epsilon}$. We construct a sequence of matrices $M=M_{0},M_{1},\dots$ satisfying the following properties:
\begin{itemize}
	\item $M_{i+1}$ is a submatrix of $M_i$ for every $i$.
	\item The size of $M_{i}$ is $\frac{n}{k^i}\times n$ for every $i$.
	\item With a certain definition of ``density'', the  density of $K_{t,t}$'s in $M_{i}$ is increasing as $i$ increases.
\end{itemize} 
However, if this sequence is long enough, we eventually run into a contradiction as we get matrices which are too small to have the claimed density of $K_{t,t}$'s. 

To construct the above sequence, for each $i$, we divide $M_i$ into $k$ equal sized blocks along its rows, and prove that either $A$ can be embedded into $M_i$ such that each row of $A$ is embedded into a different block (in which case we are done), or one of the blocks must have a high density of $K_{t,t}$'s, and then we set this block to be $M_{i+1}$.

In order to prove this, we adapt the so called dependent random choice method \cite{AKS03}. While this method is usually formulated in a probabilistic way, it can be easily turned into a counting argument, which is more suitable for our purposes. Roughly, the idea is to double count certain ``light'' copies of $K_{t,t}$, where a copy of $K_{t,t}$ is light if it belongs to $t$ columns that only intersect a few of the blocks in $1$-entries.

\subsection{Proof of theorem \ref{thm:simplerversion}}

\begin{definition}
	\label{horizontal_block}
Given an $m \times n$ sized zero-one matrix $M$ and integer $k$, where $k$ divides $m$, the submatrix of $M$ formed by the rows $\frac{(p-1)m}{k}+1, \frac{(p-1)m}{k}+2, \ldots, \frac{pm}{k}$ of $M$ is called a \emph{horizontal block} of $M$ for any integer $p$ with $1 \le p \le k$. Note that this \emph{partitions} $M$ into $k$ horizontal blocks and each horizontal block is of size $\frac{m}{k} \times n$.
\end{definition}
We will need the following two simple lemmas.

\begin{lemma}\label{lemma:counting}
	Let $M$ be an $n\times n$ sized matrix such that $w(M)>tun^{2-\frac{1}{u}}$. Then the number of copies of $K_{u,t}$ in $M$ is at least 
	$$\frac{1}{u^{ut-t+u}t^{t}}\cdot\frac{w(M)^{tu}}{n^{2ut-u-t}}.$$
	
\end{lemma}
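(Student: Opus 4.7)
My approach will be a standard double-counting argument of Kővári--Sós--Turán type, combined with two applications of convexity for the extended binomial coefficient.

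The plan is as follows. For each $u$-subset $S$ of the row set, let $m(S)$ denote the number of columns $j$ for which $M(i,j)=1$ for every $i\in S$; then the number of copies of $K_{u,t}$ in $M$ equals $\sum_{|S|=u}\binom{m(S)}{t}$, which I bound from below in two stages. First, double counting gives $\sum_S m(S)=\sum_{j=1}^{n}\binom{c_j}{u}$, where $c_j$ is the number of $1$-entries in column $j$. Since $\sum_j c_j=w(M)$, Jensen's inequality applied to the convex function $x\mapsto\binom{x}{u}$ yields $\sum_j\binom{c_j}{u}\ge n\binom{w(M)/n}{u}$, and the hypothesis $w(M)>tun^{2-1/u}$ makes $w(M)/n$ comfortably larger than $u$, so the elementary bound $\binom{x}{u}\ge(x/u)^{u}$ applies and produces $\sum_S m(S)\ge w(M)^{u}/(u^{u}n^{u-1})$.

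The second stage applies convexity once more, this time to the values $m(S)$: letting $\bar m=\sum_S m(S)/\binom{n}{u}$, one has $\sum_{|S|=u}\binom{m(S)}{t}\ge\binom{n}{u}\binom{\bar m}{t}$. A short calculation using the previous lower bound together with $\binom{n}{u}\le n^{u}/u!$ shows that the hypothesis on $w(M)$ is just strong enough to force $\bar m\ge t$, legitimising the use of $\binom{\bar m}{t}\ge(\bar m/t)^{t}$. Combining everything gives
$$\sum_{|S|=u}\binom{m(S)}{t}\;\ge\;\frac{1}{t^{t}\binom{n}{u}^{t-1}}\Bigl(\sum_{S}m(S)\Bigr)^{t}\;\ge\;\frac{(u!)^{t-1}\,w(M)^{ut}}{t^{t}\,u^{ut}\,n^{2ut-u-t}},$$
where I again use $\binom{n}{u}\le n^{u}/u!$ to bound $\binom{n}{u}^{t-1}$ from above.

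Finally, I verify the elementary inequality $(u!)^{t-1}\ge u^{t-u}$: it is trivial for $u=1$, and for $u\ge 2$ it follows from $u!\ge u$ together with $t-1\ge t-u$. This replaces the coefficient $(u!)^{t-1}/u^{ut}$ by the (smaller) coefficient $1/u^{ut+u-t}=1/u^{ut-t+u}$ appearing in the claim. There is no genuine obstacle: the only step requiring mild care is checking that the specific constant $tu$ in the hypothesis on $w(M)$ is large enough for both applications of the lower bound $\binom{x}{k}\ge(x/k)^{k}$ (with $k=u$ and $k=t$) to be valid.
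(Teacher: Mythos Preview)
Your proof is correct and follows essentially the same approach as the paper: both arguments write the count as $\sum_{|S|=u}\binom{m(S)}{t}$, use the double-count $\sum_S m(S)=\sum_j\binom{c_j}{u}$, and apply convexity of the extended binomial twice together with the bounds $\binom{x}{k}\ge (x/k)^k$ and $\binom{n}{u}\le n^u$ (or $n^u/u!$). The only cosmetic difference is that the paper bounds the outer factor $\binom{n}{u}$ by $(n/u)^u$ and the inner $\binom{n}{u}$ by $n^u$, whereas you keep the combined $\binom{n}{u}^{1-t}$ and use $\binom{n}{u}\le n^u/u!$ followed by the elementary inequality $(u!)^{t-1}\ge u^{t-u}$; both routes land on the same constant.
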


\begin{proof}
	Let $N$ be the number of copies of $K_{u,t}$ in $M$. For $1\leq i_{1}<\dots<i_{u}\leq n$, let $n(i_{1},\dots,i_{u})$ be the number of columns $j\in [n]$ such that $M(i_{1},j)=\dots=M(i_{u},j)=1$. Then
	$$N=\sum_{1\leq i_{1}<\dots<i_{u}\leq n}\binom{n(i_{1},\dots,i_{u})}{t}\geq \binom{n}{u}\binom{\frac{1}{\binom{n}{u}}\sum_{1\leq i_{1}<\dots<i_{u}\leq n}n(i_{1},\dots,i_{u})}{t},$$
	where the inequality holds by convexity. But the sum $\sum_{1\leq i_{1}<\dots<i_{u}\leq n}n(i_{1},\dots,i_{u})$ is equal to the sum of the number of $u$ element subsets of $1$-entries contained in a column (with the sum taken over all the columns). Therefore, if $m_{j}$ denotes the number of $1$-entries in the $j$-th column for $j\in [n]$, then
	
	$$\sum_{1\leq i_{1}<\dots<i_{u}\leq n}n(i_{1},\dots,i_{u})=\sum_{j=1}^{n}\binom{m_{j}}{u}\geq n\binom{\frac{1}{n}\sum_{j=1}^{n}m_{j}}{u}=n\binom{\frac{1}{n}w(M)}{u}\geq \frac{w(M)^{u}}{n^{u-1}u^{u}},$$
	where the first inequality holds by convexity, and the second assuming that $w(M)\geq nu$. But then
	$$N\geq\binom{n}{u}\binom{\frac{w(M)^{u}}{n^{u-1}u^{u-1}\binom{n}{u}}}{t}\geq \left(\frac{n}{u}\right)^{u}\binom{\frac{w(M)^{u}}{n^{2u-1}u^{u-1}}}{t}\geq \frac{w(M)^{tu}}{n^{2ut-u-t}u^{ut-t+u}t^{t}},$$
	where the last inequality holds by the assumption $w(M)>tun^{2-\frac{1}{u}}$.
\end{proof}

\begin{lemma}
	\label{lemma:t-partite}
 Let $H$ be a $t$-uniform hypergraph on $[n]$ such that $H$ does not contain a complete $t$-partite hypergraph with parts $V_1, V_2, \ldots, V_t$, where $\abs{V_i} = s$ and every element of $V_i$ is smaller than every element of $V_j$ for every $i < j$. Then $H$ contains at most $2n^{t-\delta}$ hyperedges, where $\delta=\frac{1}{ts^{t-1}}$.
\end{lemma}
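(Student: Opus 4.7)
The plan is to proceed by induction on $t$.

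For the base case $t = 2$, where $\delta = 1/(2s)$, a direct K\H{o}v\'ari--S\'os--Tur\'an style argument suffices. Given a graph $G$ on $[n]$, let $d^+(v)$ be the number of neighbors of $v$ larger than $v$, so $\sum_v d^+(v) = e(G)$. Count pairs $(v, V_2)$ where $V_2$ is an $s$-subset of the forward-neighbors of $v$. By convexity, this count is $\sum_v \binom{d^+(v)}{s} \ge n \binom{e(G)/n}{s}$. If some $s$-set $V_2$ arises from at least $s$ different $v$'s, those $v$'s (all less than $\min V_2$) give the forbidden $V_1$; otherwise the count is at most $(s-1)\binom{n}{s}$, yielding $e(G) = O(n^{2-1/s})$, which is stronger than needed. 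Small values of $n$ are handled by the trivial bound $\binom{n}{2} \le 2 n^{2-1/(2s)}$.

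For the inductive step, assume the statement for $t-1$ with $\delta' = 1/((t-1)s^{t-2})$. For each $s$-set $V_1 \subset [n]$, define the \emph{common link} $L_{V_1}$ to be the $(t-1)$-uniform hypergraph on $\{v : v > \max V_1\}$ whose edges are $(t-1)$-sets $\{v_2, \ldots, v_t\}$ such that $\{v_1, v_2, \ldots, v_t\} \in H$ for every $v_1 \in V_1$. The critical observation is that an ordered complete $(t-1)$-partite subhypergraph $V_2 < \cdots < V_t$ inside $L_{V_1}$, together with $V_1$ (which lies entirely below $V_2$ by the restriction on the vertex set), produces the forbidden $t$-partite structure in $H$. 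Hence, assuming $H$ contains no such structure, the induction hypothesis applied to each $L_{V_1}$ yields $|L_{V_1}| \le 2 n^{t-1-\delta'}$.

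We now double-count pairs $(V_1, e)$ with $e \in L_{V_1}$. Grouping by $V_1$ gives the upper bound $2 n^{t-1-\delta'} \binom{n}{s}$. Grouping instead by the $(t-1)$-tuple $(v_2 < \cdots < v_t)$, with $d(v_2, \ldots, v_t) := |\{v_1 < v_2 : \{v_1, v_2, \ldots, v_t\} \in H\}|$, the contribution is $\binom{d(v_2, \ldots, v_t)}{s}$. Since $\sum d = |H|$, convexity gives $\sum \binom{d}{s} \ge \binom{n}{t-1}\binom{|H|/\binom{n}{t-1}}{s}$. The elementary binomial inequalities from the preliminaries rearrange this into $|H| \le C(s,t)\, n^{t - \delta'/s}$ for a constant depending only on $s$ and $t$. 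Because $\delta'/s = 1/((t-1)s^{t-1})$ is strictly larger than $\delta = 1/(t s^{t-1})$, this beats $2 n^{t - \delta}$ once $n$ exceeds a threshold depending only on $s,t$; the remaining small values of $n$ are absorbed by the trivial bound $|H| \le \binom{n}{t} \le 2 n^{t-\delta}$. The main conceptual step is setting up the common link with the ``vertices above $\max V_1$'' restriction, so that the inductive call manufactures the parts that naturally lie above $V_1$ in the ordering; the rest is routine K\H{o}v\'ari--S\'os--Tur\'an-type convexity and exponent arithmetic.
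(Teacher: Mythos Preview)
Your argument is correct and takes a genuinely different route from the paper. The paper partitions $[n]$ by a random $(t-1)$-cut, separates edges into ``good'' and ``bad'' according to how spread out their vertices are, bounds the bad edges trivially, and then applies Erd\H{o}s's classical bound on complete $t$-partite subhypergraphs to the $t$-partite hypergraph sitting across a well-chosen cut. Your proof instead runs an induction on $t$: for every $s$-set $V_1$ you form the common link $L_{V_1}$ on the vertices above $\max V_1$, observe that an ordered complete $(t-1)$-partite configuration in $L_{V_1}$ would extend to the forbidden structure in $H$, apply the inductive bound to each $L_{V_1}$, and finish with a K\H{o}v\'ari--S\'os--Tur\'an style double count of pairs $(V_1,e)$.

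What each approach buys: your argument is fully self-contained, since it does not invoke Erd\H{o}s's theorem as a black box but rather reproves the relevant inequality step by step; in fact the exponent you obtain, $\delta'/s = 1/((t-1)s^{t-1})$, is already strictly better than the $\delta = 1/(ts^{t-1})$ in the statement (and if one tracks the induction starting from the sharp $1/s$ in the base case one recovers $1/s^{t-1}$, matching the improvement the paper mentions is possible via the splitting theorem of F\"uredi, Jiang, Kostochka, Mubayi and Verstra\"{e}te). The paper's cut argument, on the other hand, is induction-free and makes the passage to an \emph{interval} $t$-partite subhypergraph more transparent, which is conceptually aligned with how the lemma is used later.

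One small point worth writing out explicitly in a final version: you assert that the threshold in $n$ beyond which $C(s,t)\,n^{t-\delta'/s} \le 2n^{t-\delta}$ is covered by the range where the trivial bound $\binom{n}{t}\le 2n^{t-\delta}$ applies. This does hold (the constant $C(s,t)$ coming out of the convexity step is bounded by an absolute constant times a small power of $t!$, and the trivial bound is valid up to $n\le (2t!)^{1/\delta}$), but since the lemma claims the explicit leading constant $2$, the overlap of the two ranges should be verified rather than asserted.
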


\begin{proof}
A \emph{$t$-cut} $(i_1, i_2, \ldots, i_{t-1})$ is a partition of $[n]$ into $t$ parts $\{1,2, \ldots, i_1\}$, $\{i_1+1,  i_1+2, \ldots,i_2 \}$, $\{i_2+1, i_2+2,\ldots, i_3\}, \ldots, \{i_{t-1}+1, i_{t-1}+2, \ldots, n \}$. A \emph{random $t$-cut} $(i_1, i_2, \ldots, i_{t-1})$ is a $t$-cut where the integers $i_1, i_2, \ldots, i_{t-1}$ are chosen uniformly at random with  $i_1 < i_2 < \ldots <i_{t-1}$. An edge $e = \{x_1, x_2, \ldots, x_t\}$ of $H$ with $x_1 < x_2 < \ldots < x_t$ is \emph{cut} by a $t$-cut $(i_1, i_2, \ldots, i_{t-1})$ if $x_j \in \{i_{j-1}+1, i_{j-1}+2,\ldots, i_j\}$ for each $1 \le j \le t$ (where $i_0 = 0$ and $i_t = n$). Then the probability that a given edge $e = \{x_1, x_2, \ldots, x_t\}$ of $H$ with $x_1 < x_2 < \ldots < x_t$ is cut by a random $t$-cut is $(\frac{x_2-x_1}{n}) (\frac{x_3-x_2}{n}) \ldots (\frac{x_t-x_{t-1}}{n})$. We say that $e$ is \emph{bad} if this quantity is less than $n^{-\gamma}$, where $\gamma=\frac{t-1}{ts^{t-1}}$; otherwise $e$ is \emph{good}. Note that if $e$ is bad, then there is a $j = j(e)$ with $1 \le j \le t-1$ such that $\frac{x_{j+1}-x_{j}}{n}$ is less than $n^{-\frac{\gamma}{t-1}}$. So once $x_j$ is fixed, the number of possible choices for $x_{j+1}$ are less than $n^{1-\frac{\gamma}{t-1}}$. Therefore, the number of bad edges $e$ in $H$ is less than $n^{t-\frac{\gamma}{t-1}}$.

Assume that $\abs{E(H)} \ge 2n^{t-\frac{\gamma}{t-1}}$. Then $H$ has at least $n^{t-\frac{\gamma}{t-1}}$ good edges. But by the definition of good edges, the expected number of edges that are cut by a random $t$-cut is at least $n^{t-\frac{\gamma}{t-1}} \cdot n^{-\gamma} = n^{t(1-\frac{\gamma}{t-1})}$. So there exists a $t$-cut that cuts at least $n^{t(1-\frac{\gamma}{t-1})}$ edges.  These $n^{t(1-\frac{\gamma}{t-1})}$ edges form a $t$-partite hypergraph $H'$ whose $t$ classes are the $t$ parts of the $t$-cut. By a well-known result of Erd\H{o}s \cite{E64}, as $|E(H')|\geq  n^{t-\frac{1}{s^{t-1}}}$,  $H'$ contains a complete $t$-partite hypergraph with parts of size $s$. But then $H'$ contains such a $t$-partite hypergraph, whose parts are ordered in the desired manner, leading to a contradiction.  
\end{proof}

Let us make the following remark about Lemma \ref{lemma:t-partite}. Say that a $t$-uniform hypergraph on an ordered vertex set $V$ is interval $t$-partite, if $V$ can be partitioned into $t$ intervals $I_{1},\dots, I_{t}$  (with respect to the ordering) such that each hyperedge of $H$ contains exactly one element of each of the intervals. Recently, it was proved by F\"uredi, Jiang, Kostochka, Mubayi and Verstra\"{e}te \cite{FJKMV19} that for every $t-1<\alpha<t$ there exists $c=c(t,\alpha)$ such that every $t$-uniform hypergraph $H$ on an ordered vertex set of size $n$ with $dn^{\alpha}$ edges contains an interval $t$-partite hypergraph with $m$ vertices and at least $cdm^{\alpha}$ hyperedges for some $m\leq n$. Combining this with the result of Erd\H{o}s \cite{E64} cited in our proof, we get that the bound $2n^{t-\delta}$ in Lemma \ref{lemma:t-partite} can be improved to $O(n^{t-\frac{1}{s^{t-1}}})$. However, this improvement of Lemma \ref{lemma:t-partite} does not change the exponent in Theorem \ref{thm:simplerversion}. We included the simple proof of Lemma \ref{lemma:t-partite} for completeness.

For the rest of the proof, fix $\delta=\frac{1}{ts^{t-1}}$. The following lemma is the heart of the proof.

\begin{lemma}
\label{lemma:densityincrement}
Let $u$ be a positive integer, then there exists a constant $C=C(t,u,r,s,k)$ such that the following holds. Let $m,n$ be positive integers such that $k$ divides $m$, and let $M$ be an $m \times n$ sized matrix which does not contain $A$. Partition $M$ into $k$ horizontal blocks. Let $N$ be the number of copies of $K_{u,t}$ in $M$, and suppose that $N>C\max\{m^{u},n^{t}\}$. Then one of the $k$ horizontal blocks of $M$ contains at least $\frac{u!}{4 r^{u-1} u^u}\frac{N}{k}$ copies of $K_{u,t}$.
\end{lemma}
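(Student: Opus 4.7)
The plan is to reduce the conclusion to a lower bound on $N^{\mathrm{same}}:=\sum_p N_p$, where $N_p$ denotes the number of $K_{u,t}$ copies entirely inside horizontal block $p$. Since $\max_p N_p\ge N^{\mathrm{same}}/k$, it suffices to show $N^{\mathrm{same}}\ge \frac{u!}{4r^{u-1}u^u}N$ under the hypotheses, so the argument boils down to a density-increment style bound. To analyse both $N$ and $N^{\mathrm{same}}$ uniformly, I would parametrise by $t$-tuples of columns: for each $t$-subset $T$ of columns of $M$, let $R^*(T)$ be the set of rows in which every column of $T$ contains a $1$, let $r_p(T)=|R^*(T)\cap\mathrm{block}_p|$, and let $B(T)=\{p:r_p(T)\ge 1\}$. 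Then $N=\sum_T \binom{|R^*(T)|}{u}$ and $N^{\mathrm{same}}=\sum_T\sum_p\binom{r_p(T)}{u}$.

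Split the column tuples into \emph{concentrated} ($|B(T)|\le r-1$) and \emph{spread} ($|B(T)|\ge r$). For concentrated $T$, since at most $r-1$ of the $r_p(T)$ are non-zero, the power-mean inequality $(x_1+\dots+x_{r-1})^u\le (r-1)^{u-1}\sum x_i^u$ together with the preliminary bound $x^u\le u^u\binom{x}{u}$ (valid for $x\ge u$; terms with $r_p(T)<u$ contribute only an additive $O_{r,u}(1)$ per $T$) yields
\[
\binom{|R^*(T)|}{u}\le \frac{(r-1)^{u-1}u^u}{u!}\sum_p\binom{r_p(T)}{u}+O_{r,u}(1).
\]
Summing over concentrated $T$ gives $N^{\mathrm{conc}}\le \frac{(r-1)^{u-1}u^u}{u!}N^{\mathrm{same}}+O(n^t)$, and the additive error is absorbed via the hypothesis $N>Cn^t$ by choosing $C$ large enough.

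The hard part will be bounding the spread contribution $N^{\mathrm{spread}}$ by a small constant fraction of $N$. Here I would apply Lemma~\ref{lemma:t-partite} to the $t$-uniform hypergraph on $[n]$ whose edges are the spread $t$-subsets of columns: if this hypergraph had more than $2n^{t-\delta}$ edges, it would contain ordered complete $t$-partite parts $V_1<\dots<V_t$ of size $s$, with every cross $t$-tuple spread. Column-$t$-partiteness of $A$ is used crucially at this step: the $s_g$ columns of $A$'s $g$-th column group are routed into $V_g$; for each row $i$ of $A$ the at most $t$ columns in which $i$ has a $1$ form a cross $t$-tuple with $\ge r$ candidate blocks of $M$; Hall's marriage theorem produces an assignment of $A$'s $r$ rows to $r$ distinct blocks (its condition holds because every row supplies $\ge r$ candidates); selecting one all-$1$ row from each assigned block embeds $A$ into $M$, contradicting $A$-avoidance. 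Thus at most $2n^{t-\delta}$ spread $T$'s exist. Combining this with the K\H{o}v\'ari--S\'os--Tur\'an-type consequence of $A$-avoidance --- namely that $M$ avoids $K_{r,s}\supseteq A$, whence every $r$-subset of rows has codegree less than $s$ and $\sum_T\binom{|R^*(T)|}{r}=\sum_Q\binom{d(Q)}{t}\le \binom{m}{r}\binom{s}{t}=O(m^r)$ --- plus the inequality $\binom{x}{u}\le \frac{r^r}{u!}\binom{x}{r}$ for $u\le r$ (the case $u\ge r$ already forces $N=O(m^u)$, making the hypothesis vacuous for $C$ large), one obtains $N^{\mathrm{spread}}\le O_{r,s,t,u}(m^r)$, which is absorbed into $N/2$ once $C=C(t,u,r,s,k)$ is chosen sufficiently large relative to $k$. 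Putting the pieces together yields $N\le \frac{(r-1)^{u-1}u^u}{u!}N^{\mathrm{same}}+\frac{N}{2}+o(N)$, so after absorbing the minor constants $N^{\mathrm{same}}\ge \frac{u!}{4r^{u-1}u^u}N$, and pigeonhole concludes the proof.
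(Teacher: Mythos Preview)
Your treatment of the concentrated (light) column-tuples via convexity is essentially the paper's argument and is fine. The spread (heavy) part, however, has two genuine gaps.

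First, Hall's marriage theorem gives you a system of \emph{distinct} representatives, not an \emph{ordered} one. Containment of $A$ in $M$ requires the $r$ rows of $A$ to land in rows $i'_1<i'_2<\dots<i'_r$ of $M$, so you need blocks $p_1<p_2<\dots<p_r$ with $p_a$ a candidate for row $a$. Merely knowing that each row of $A$ has $\ge r$ candidate blocks does not guarantee such an increasing transversal (take $r=2$, candidates of row $1$ equal $\{k-1,k\}$, candidates of row $2$ equal $\{1,2\}$). The paper avoids this by assigning to each spread edge a fixed label $L\in\binom{[k]}{r}$ with $L\subseteq\phi(T)$ and applying Lemma~\ref{lemma:t-partite} label-by-label, so that in the resulting complete $t$-partite configuration \emph{every} cross tuple shares the \emph{same} set $L=\{l_1<\dots<l_r\}$ of blocks, and row $a$ of $A$ is simply placed in block $l_a$.

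Second, and more fundamentally, your bound $N^{\mathrm{spread}}=O_{r,s,t,u}(m^r)$ cannot be absorbed. In the non-vacuous regime $u<r$ the hypothesis only supplies $N>Cm^u$, and no choice of $C$ makes $m^r\le N/2$. The global count of spread tuples, $O_{k,r}(n^{t-\delta})$, does not help either: a single spread $T$ can contribute as many as $\binom{m}{u}$ copies of $K_{u,t}$, so the product $n^{t-\delta}m^u$ is again not dominated by $\max\{m^u,n^t\}$. The paper's key idea here is to apply Lemma~\ref{lemma:t-partite} not globally on $[n]$ but \emph{locally} inside each common neighbourhood $N(i_1,\dots,i_u)$: this yields at most $2\binom{k}{r}\,|N(i_1,\dots,i_u)|^{\,t-\delta}$ heavy tuples within that neighbourhood. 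Summing over all $u$-tuples of rows and comparing with $N=\sum_{i_1<\dots<i_u}\binom{|N(i_1,\dots,i_u)|}{t}$, the saving of $\delta$ in the exponent is exactly what turns the heavy contribution into at most $N/2$ (after separating the $u$-tuples with small neighbourhood, whose total contribution is $O(m^u)$ and is absorbed by the hypothesis). Without this localisation your bound on $N^{\mathrm{spread}}$ is of the wrong order, and the argument does not close.
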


\begin{proof}
Let us define an edge-labeled $t$-uniform hypergraph $H$ on $[n]$ as follows: $\{j_1, j_2, \ldots, j_t\}$ is an edge of $H$ if there is a row $i$ of $M$ such that $M(i, j_\ell) = 1$ for all $1 \le \ell \le t$, and let us define the function
\begin{multline*}
\phi(\{j_1, j_2, \ldots, j_t\}) = \{ b \in [k] \mid \text{there exists a row $i$ in a horizontal block $b$ } \\ \text{such that $M(i, j_\ell) = 1$ for every $1 \le \ell \le t$} \}.
\end{multline*}
An edge $e$ of $H$ is \emph{light} if $\abs{\phi(e)} < r$, otherwise it is \emph{heavy}. Moreover, if $e$ is heavy, let us label $e$ with an arbitrary subset of $\phi(e)$ of size $r$.

A copy of $K_{u,t}$ is \emph{light} if it is contained in the $t$ columns $j_1, j_2, \ldots, j_t$ such that  $\{j_1, j_2, \ldots, j_t\}$ is a light edge of $H$. Otherwise, it is called a \emph{heavy} copy.  Our goal now is to show that most of the copies of $K_{u,t}$ in $M$ are light. For $u$ distinct rows indexed by $i_1, i_2, \ldots, i_u$, let $N(i_1, i_2, \ldots, i_u)$ be the set of columns $j$ such that $M(i_\ell, j) = 1$ for all $1 \le \ell \le u$.
\begin{claim}
	\label{claim:boundingHeavyCopies}
Let $i_1, i_2, \ldots, i_u$ be the indices of $u$ distinct rows of $M$. Then the number of heavy edges in $H[N(i_1, i_2, \ldots, i_u)]$ is at most $2\binom{k}{r} \abs{N(i_1, i_2, \ldots, i_u)}^{t-\delta}$.
\end{claim}
\begin{proof}
As $A$ is column-$t$-partite, there exists a partition of $[s]$ into $t$ intervals $I_{1},\dots,I_{t}$ such that for every $a\in [r]$ and $c\in [t]$, there exists at most one index $b=b(a,c)\in I_{\ell}$ (for some $\ell$) such that $A(a,b)=1$. Without loss of generality, we can assume that there exists exactly one such index.

   Suppose that there exists a label $L$ and $t$ disjoint sets $V_1, V_2, \ldots, V_t$ in $N(i_{1},\dots,i_{u})$ such that $\abs{V_i} = |I_{i}|$, all the vertices of $V_i$ precede the vertices of $V_j$ for every $i < j$, and for every $(j_{1},\dots,j_{t})\in V_{1}\times\dots \times V_{t}$, $\{j_{1},\dots,j_{t}\}$ is an edge of $H$ with label $L$. In this case, we show that $M$ contains $A$. Indeed, let $l_{1}<\dots<l_{r}$ be the elements of $L$, and let $v_{1}<\dots<v_{s}$ be the elements of $V_{1}\cup\dots\cup V_{t}$. For each $a\in [r]$, there exists a row $i'_{a}$ in block $l_{a}$ such that $M(i'_{a},v_{b(a,1)})=M(i'_{a},v_{b(a,2)})=\dots=M(i'_{a},v_{b(a,t)})=1$. But then the $r\times s$ times submatrix of $M$ indexed by the rows $\{i'_{1},\dots,i'_{r}\}$ and columns $\{v_{1},\dots,v_{s}\}$ contains $A$.

  As $M$ does not contain $A$, Lemma \ref{lemma:t-partite} implies that the number of heavy edges in $H[N(i_1, i_2, \ldots, i_u)]$ with the same label is at most $2\abs{N(i_1, i_2, \ldots, i_u)}^{t-\delta}$. But there are at most $\binom{k}{r}$ possible labels, so the total number of heavy edges in $H[N(i_1, i_2, \ldots, i_t)]$ is at most $\binom{k}{r} \cdot 2\abs{N(i_1, i_2, \ldots, i_t)}^{t-\delta}$, as desired.
\end{proof}

\begin{claim}
	\label{claim:heavycount}
There exists a constant $C'=C'(t,u,r,s,k)$ such that if $N > C'm^{u}$, then the number of heavy copies of $K_{u,t}$ in $M$ is at most $\frac{N}{2}$.
\end{claim}
\begin{proof}
Let $C_{0}$ be a positive real number such that $C_{0}^{\delta}\geq 8t^{t}\binom{k}{r}$. We show that $C'=8\binom{k}{r}C_{0}^{t-\delta}$ suffices. 

Suppose that $N\geq C' m^{u}$. The number of copies of $K_{u,t}$ in $M$ is 
\begin{equation}
\label{eq1}
N = \sum_{1\leq i_1<\ldots<i_u\leq m} \binom{\abs{N(i_1, i_2, \ldots, i_u)}}{t}.
\end{equation}
By Claim \ref{claim:boundingHeavyCopies}, the number of heavy copies of $K_{u,t}$ in $M$ is at most $$\sum_{1\leq i_1<\ldots<i_u\leq m} 2 \binom{k}{r} \abs{N(i_1, i_2, \ldots, i_t)}^{t-\delta}.$$
 For integers $1\leq i_1<\dots<i_u\leq m$, we say that $(i_1, i_2, \ldots, i_u)$ is \emph{bad} if $N(i_1, i_2, \ldots, i_u) < C_{0}$, otherwise $(i_1, i_2, \ldots, i_u)$ is \emph{good}. Then

\begin{equation}
\label{eq:bad}
\sum_{(i_1, i_2, \ldots, i_u) \text{ is bad}}  2 \binom{k}{r} \abs{N(i_1, i_2, \ldots, i_u)}^{t-\delta} < m^u  2 \binom{k}{r} C_{0}^{t-\delta} \le \frac{N}{4}.
\end{equation}
Moreover, if $(i_1, i_2, \ldots, i_u)$ is \emph{good}, then 

$$\abs{N(i_1, i_2, \ldots, i_u)}^{t-\delta} \le \frac{\abs{N(i_1, i_2, \ldots, i_u)}^{t}}{C_{0}^{\delta}} \le \frac{t^{t}}{C_{0}^{\delta}} \binom{\abs{N(i_1, i_2, \ldots, i_u)}}{t}\leq \frac{1}{8\binom{k}{r}}\binom{\abs{N(i_1, i_2, \ldots, i_u)}}{t}.$$
Therefore, using \eqref{eq1}, we get
\begin{multline}
\label{eq:good}
\sum_{(i_1, i_2, \ldots, i_u) \text{ is good}} 2 \binom{k}{r} \abs{N(i_1, i_2, \ldots, i_u)}^{t-\delta}\leq \sum_{1\leq i_1<\dots<i_u\leq m} \frac{1}{4}\binom{\abs{N(i_1, i_2, \ldots, i_u)}}{t} =\frac{N}{4}.
\end{multline}
Combining \eqref{eq:bad} and \eqref{eq:good} finishes the proof of the claim.
\end{proof}

We show that the constant $C=\max\{C',4\binom{ru}{u}\}$ satisfies the desired properties of Lemma \ref{lemma:densityincrement}, where $C'$ is the constant defined in Claim \ref{claim:heavycount}. Suppose that $N\geq Cm^{u}$, then by Claim \ref{claim:heavycount}, the number of light copies of $K_{u,t}$ in $M$ is at least $\frac{N}{2}$.  For distinct integers $j_1, j_2, \ldots, j_t \in [n]$, let $N'(j_1, j_2, \ldots, j_t)$ be the set of rows $i$ such that $M(i, j_\ell) = 1$ for all $1 \le \ell \le t$. Then the number of light copies of $K_{u,t}$ in $M$ is $$\sum_{\{j_1, j_2, \ldots, j_t\} \text{ is light} } \binom{\abs{N'(j_1, j_2, \ldots, j_t)}}{u}.$$ Thus, 

\begin{equation}
\sum_{\{j_1, j_2, \ldots, j_t\} \text{ is light} } \binom{\abs{N'(j_1, j_2, \ldots, j_t)}}{u} \ge \frac{N}{2}.
\end{equation}

A copy of $K_{u,t}$ is called \emph{narrow} if it is completely contained within a horizontal block. If $\{j_1, j_2, \ldots, j_t\} $ is a light edge of $H$, then the elements of $N'(j_1, j_2, \ldots, j_t)$ are contained in less than $r$ blocks, say, $b_1, b_2, \ldots, b_{\ell}$ for some $\ell < r$. If $N'_q(j_1, j_2, \ldots, j_t)$ denotes the number of elements of $N'(j_1, j_2, \ldots, j_t)$ that are contained in the horizontal block $b_q$, then the number of narrow copies of $K_{u,t}$ in $M$ that are contained in the columns $j_1, j_2, \ldots, j_t$ is at least $$\sum_{1 \le q \le \ell} \binom{\abs{N'_q(j_1, j_2, \ldots, j_t)}}{u} \ge
\ell \binom{\frac{1}{\ell}\abs{N'(j_1, j_2, \ldots, j_t)}}{u},$$ 
where the first inequality holds by the convexity of the function $f(x)=\binom{x}{u}$. Say that $\{j_{1},\dots,j_{t}\}$ is \emph{bad} if $|N(j_{1},\dots,j_{t})|\leq ru$, otherwise $\{j_{1},\dots,j_{t}\}$ is good. We have
$$\sum_{\{j_{1},\dots,j_{t}\}\text{ is bad}}\binom{\abs{N'(j_1, j_2, \ldots, j_t)}}{u}\leq \binom{ru}{u}n^{t}\leq \frac{C n^t}{4} < \frac{N}{4},$$
which implies
\begin{equation}
\label{eq:lightcount2}
\sum_{\substack{\{j_1, j_2, \ldots, j_t\} \text{ is}\\\text{light and good}} } \binom{\abs{N'(j_1, j_2, \ldots, j_t)}}{u} \ge \frac{N}{4}.
\end{equation}
On the other hand, if $\{j_{1},\dots,j_{t}\}$ is good, then 
$$\ell \binom{\frac{1}{\ell}\abs{N'(j_1, j_2, \ldots, j_t)}}{u}\geq \frac{\abs{N'(j_1, j_2, \ldots, j_t)}^u}{r^{u-1} u^u}.$$
Therefore, using \eqref{eq:lightcount2}, the total number of narrow copies of $K_{u,t}$ in $M$ is at least $$\sum_{\substack{\{j_1, j_2, \ldots, j_t\} \text{ is} \\ \text{light and good}}}\frac{\abs{N'(j_1, j_2, \ldots, j_t)}^u}{r^{u-1} u^u} \ge \frac{N u! }{4 r^{u-1} u^u}.$$

As there are $k$ horizontal blocks in total, and by definition, each narrow copy of $K_{u,t}$ completely lives within one of these blocks, there is a horizontal block of $M$ which contains at least $\frac{N u!}{4 r^{u-1} u^u k}$ copies of $K_{u,t}$, completing the proof of Lemma  \ref{lemma:densityincrement}.
\end{proof}

Now we are ready to prove Theorem \ref{thm:simplerversion}. Suppose that there exists arbitrarily large $n$ such that $\ex(n,A)\geq n^{2-\frac{1}{t}+\frac{1}{t^{2}}+\epsilon}$. But then there exists arbitrarily large $n$ such that $n$ is a power of $k$ and $\ex(n,A)\geq n^{2-\frac{1}{t}+\frac{1}{t^{2}}+\frac{\epsilon}{2}}$. Indeed, if $n$ is such that $\ex(n,A)\geq n^{2-\frac{1}{t}+\frac{1}{t^{2}}+\epsilon}$ and $n'$ is the smallest power of $k$ such that $n'\geq n$, then 
$$\ex(n',A)\geq \ex(n,A)\geq n^{2-\frac{1}{t}+\frac{1}{t^{2}}+\epsilon}\geq \left(\frac{n'}{k}\right)^{2-\frac{1}{t}+\frac{1}{t^{2}}+\epsilon}\geq (n')^{2-\frac{1}{t}+\frac{1}{t^{2}}+\frac{\epsilon}{2}},$$
assuming $n$ is sufficiently large, given $k, t$ and $\epsilon$.

 Therefore, we can suppose that $n=k^{z}$, where $z$ is some integer and $\ex(n,A)\geq n^{2-\frac{1}{t}+\frac{1}{t^{2}}+\frac{\epsilon}{2}}$. Let $M$ be an $n \times n$ matrix that does not contain $A$ such that $w(M)=\ex(n,A)$, and let $N$ denote the number of copies of $K_{t,t}$ in $M$. Also, let $C=C(t,t,r,s,k)$ be the constant defined in Lemma \ref{lemma:densityincrement}. By Lemma \ref{lemma:counting},
\begin{equation}
\label{eq:supersaturation}
N \geq c\frac{w(M)^{t^{2}}}{n^{2t^{2}-2t}}\geq c n^{t+1+2\epsilon},
\end{equation}
where $c=t^{-t^{2}-t}$. We construct a sequence of matrices $M=M_{0},M_{1},\dots,M_{z}$ such that for $i=0,1,\dots,z$, the size of $M_{i}$ is $\frac{n}{k^{i}}\times n$, and $M_{i}$ contains $N_{i}$ copies of $K_{t,t}$, where $N_{i}\geq \frac{N}{k^{(1+\epsilon)i}}$. If $M_{i}$ is already defined satisfying these properties for some $i<z$, we define $M_{i+1}$ as follows. First, note that if $n\geq (\frac{C}{c})^{\frac{1}{\epsilon}}$, we have $N_{i}\geq Cn^{t}$. Indeed, remembering that $k^{i}< n$, we can write
	$$N_{i}\geq \frac{N}{k^{(1+\epsilon)i}}\geq \frac{c n^{t+1+2\epsilon}}{k^{(1+\epsilon)i}}\geq c n^{t+\epsilon}\geq Cn^{t}.$$
Hence, we can apply Lemma \ref{lemma:densityincrement} to find an $\frac{n}{k^{i+1}}\times n$ sized submatrix $M_{i+1}$ of $M_{i}$ with the following property: if $N_{i+1}$ denotes the number of copies of $K_{t,t}$ in $M_{i+1}$, then $N_{i+1}\geq \frac{N_{i} t!}{4r^{t-1}t^{t}k}$. But we chose $k$ such that $k \ge {(\frac{4 r^{t-1} t^t}{t!})}^{\frac{1}{\epsilon}}$, so we have $N_{i+1}\geq \frac{N_{i}}{k^{1+\epsilon}}\geq \frac{N}{k^{(1+\epsilon)(i+1)}}$, satisfying the desired properties.

But then we arrive to a contradiction: the size of the matrix  $M_{z}$ is $1\times n$, but $M_{z}$ contains at least $N_{z}\geq \frac{N}{k^{z(1+\epsilon)}}\geq c n^{t+\epsilon}$ copies of $K_{t,t}$, which is clearly impossible. In particular, we run into a contradiction much earlier: the matrix $M_{\lceil (1-\epsilon/t)z\rceil}$ has less than $n^{\frac{\epsilon}{t}}$ rows, but it  contains more than $n^{t+\epsilon}$ copies of $K_{t,t}$, which is also impossible.

\section{$t\times t$-partite matrices - Proof of Theorem \ref{thm:symmetric}}\label{sect:txt}

In the rest of this section, $A$ is a $t\times t$-partite $r \times s$ sized zero-one matrix.  Fix a real number $\epsilon > 0$, and set $k =\left \lceil \left(\frac{16 (rs)^{t-1} t^{2t}}{(t!)^{2}}\right)^{\frac{1}{\epsilon}} \right \rceil$. We prove that $\ex(n,A)\leq n^{2-\frac{1}{t}+\epsilon}$ if $n$ is sufficiently large with respect to the parameters $\epsilon,r,s,k$.

 In addition to horizontal blocks (Definition \ref{horizontal_block}), we introduce the notion of vertical blocks and that of blocks.

\begin{definition}
	Given an $m \times n$ sized zero-one matrix $M$ and integer $k$, where $n$ is divisible by $k$, the submatrix of $M$ formed by the columns $\frac{(p-1)m}{k}+1, \frac{(p-1)m}{k}+2, \ldots, \frac{pm}{k}$ of $M$ is called a \emph{vertical block} of $M$ for any integer $p$ with $1 \le p \le k$. Note that vertical blocks partition $M$ into $k$ submatrices of size $m \times \frac{n}{k}$.
	
	Also, if both $m$ and $n$ are divisible by $k$, a \emph{block} is the intersection of a vertical and a horizontal block. Note that blocks partition $M$ into $k^{2}$ submatrices of size $\frac{n}{k}\times\frac{m}{k}$.
\end{definition}

We prove the following extension of Lemma \ref{lemma:densityincrement} for matrices not containing $A$.

\begin{lemma}
	\label{lemma:densityincrementsymmetric}
There exists a constant $C=C(t,r,s,k)$ such that the following holds. Let $n$ be a positive integer such that $k$ divides $n$, and let $M$ be an $n \times n$ sized matrix which does not contain $A$. Partition $M$ into $k^{2}$ blocks. Let $N$ be the number of copies of $K_{t,t}$ in $M$, and suppose that $N>Cn^{t}$. Then one of the $k^{2}$ blocks of $M$ contains at least $\frac{(t!)^{2}}{16 (rs)^{t-1} t^{2t}}\frac{N}{k^{2}}$ copies of $K_{t,t}$.
\end{lemma}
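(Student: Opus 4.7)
The plan is to apply Lemma \ref{lemma:densityincrement} twice: first along the horizontal decomposition, exploiting that $A$ is column-$t$-partite, and then along the vertical decomposition via transposition, exploiting that $A$ is row-$t$-partite (equivalently, that $A^{T}$ is column-$t$-partite). The product of the two resulting increment factors recovers exactly $\frac{(t!)^{2}}{16(rs)^{t-1}t^{2t}}$, with each loss of a factor of $k$ corresponding to one coordinate direction.

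First I would apply Lemma \ref{lemma:densityincrement} with $u=t$ to $M$ (which is $n\times n$ with $k\mid n$), partitioned into $k$ horizontal blocks. Provided $N>C_{1}n^{t}$ with $C_{1}=C(t,t,r,s,k)$, this yields a horizontal block $M'$ of size $\frac{n}{k}\times n$ containing at least
$$N' \;\geq\; \frac{t!}{4\,r^{t-1}t^{t}\,k}\,N$$
copies of $K_{t,t}$. Next I would pass to $(M')^{T}$, a matrix of size $n\times\frac{n}{k}$ that avoids $A^{T}$. Because $A^{T}$ is column-$t$-partite, Lemma \ref{lemma:densityincrement} applies again, now with the roles of $r$ and $s$ swapped; moreover, the $k$ horizontal blocks of $(M')^{T}$ correspond bijectively to the $k$ vertical blocks of $M'$, each of which is a single block of $M$. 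Provided $N'>C_{2}n^{t}$ with $C_{2}=C(t,t,s,r,k)$, some such block of $M$ contains at least
$$\frac{t!}{4\,s^{t-1}t^{t}\,k}\,N' \;\geq\; \frac{(t!)^{2}}{16\,(rs)^{t-1}\,t^{2t}\,k^{2}}\,N$$
copies of $K_{t,t}$, which is the required bound.

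It remains to make the two threshold conditions compatible. Combining $N>Cn^{t}$ with the lower bound on $N'$ above, the second condition $N'>C_{2}n^{t}$ reduces to $C\geq \frac{4\,r^{t-1}t^{t}\,k\,C_{2}}{t!}$. Hence setting
$$C \;:=\; \max\!\left\{C_{1},\ \frac{4\,r^{t-1}\,t^{t}\,k\,C_{2}}{t!}\right\}$$
in the statement of Lemma \ref{lemma:densityincrementsymmetric} suffices, and $C$ depends only on $t,r,s,k$ as required.

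The main subtlety I anticipate is bookkeeping: verifying that the horizontally selected submatrix $M'$ still carries sufficiently many copies of $K_{t,t}$ to trigger the second application of Lemma \ref{lemma:densityincrement}, and confirming that transposition transports every hypothesis. This is precisely where the $t\times t$-partite assumption earns its keep---column-$t$-partiteness of $A$ suffices for the first step, but only the additional row-$t$-partiteness of $A$ supplies the structural hypothesis on $A^{T}$ that the second application consumes.
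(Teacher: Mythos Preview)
Your proposal is correct and follows essentially the same route as the paper: apply Lemma \ref{lemma:densityincrement} once horizontally (using that $A$ is column-$t$-partite), then once vertically via transposition (using that $A^{T}$ is column-$t$-partite), and choose $C$ large enough that the intermediate count $N'$ clears the threshold for the second application. The paper phrases the second step as invoking ``the symmetric version'' of Lemma \ref{lemma:densityincrement} rather than explicitly transposing, but this is the same argument; your choice of $C=\max\{C_{1},\,4r^{t-1}t^{t}kC_{2}/t!\}$ matches (indeed slightly sharpens) the paper's constant.
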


\begin{proof}
	Let $C'=\max\{C(t,t,r,s,k),C(t,t,s,r,k)\}$, where $C(u,t,r,s,k)$ is the constant given by Lemma \ref{lemma:densityincrement}. We show that $C=\frac{4s^{t-1}t^{t}k}{t!}C'$ suffices. As $A$ is column-$t$-partite and $N\geq C'n^{t}$, we can apply Lemma \ref{lemma:densityincrement} to find a horizontal block $M'$ of $M$ with at least $N'=\frac{t!}{4 r^{t-1} t^t}\cdot \frac{N}{k}$ copies of $K_{t,t}$. Consider the partition of $M'$ into $k$ vertical blocks. As $A$ is also row-$t$-partite and $N'\geq C'n^{t}$, we can apply the symmetric version of Lemma \ref{lemma:densityincrement} to find a horizontal block $M''$ of $M'$ with at least  $\frac{t!}{4 s^{t-1} t^t}\cdot \frac{N'}{k^{2}}=\frac{(t!)^{2}}{16 (rs)^{t-1} t^{2t}}\frac{N}{k^{2}}$ copies of $K_{t,t}$. But $M''$ is a block of $M$, so we are done.
\end{proof}

Using this lemma, we can just repeat the proof of Theorem \ref{thm:simplerversion} with some modifications.

Suppose that there exists arbitrarily large $n$ such that $\ex(n,A)\geq n^{2-\frac{1}{t}+\epsilon}$. By the same argument as before, we can assume that there exists arbitrarily large $n$ such that $n$ is a power of $k$ and $\ex(n,A)\geq n^{2-\frac{1}{t}+\frac{\epsilon}{2}}$.

Hence, let $n=k^{z}$, where $z$ is some integer such that $\ex(n,A)\geq n^{2-\frac{1}{t}+\frac{\epsilon}{2}}$. Let $M$ be an $n \times n$ matrix that does not contain $A$ such that $w(M)=\ex(n,A)$, and let $N$ denote the number of copies of $K_{t,t}$ in $M$. Also, let $C=C(t,r,s,k)$ be the constant defined in Lemma \ref{lemma:densityincrementsymmetric}. By Lemma \ref{lemma:counting},
\begin{equation}
\label{eq:supersaturation2}
N \geq c\frac{w(M)^{t^{2}}}{n^{2t^{2}-2t}}\geq c n^{t+2\epsilon},
\end{equation}
where $c=t^{-t^{2}-t}$. We construct a sequence of matrices $M=M_{0},M_{1},\dots,M_{z}$ such that for $i=0,1,\dots,z$, the size of $M_{i}$ is $\frac{n}{k^{i}}\times \frac{n}{k^{i}}$, and $M_{i}$ contains $N_{i}$ copies of $K_{t,t}$, where $N_{i}\geq \frac{N}{k^{(2+\epsilon)i}}$. If $M_{i}$ is already defined satisfying these properties for some $i<z$, we define $M_{i+1}$ as follows. First, note that if $n\geq (\frac{C}{c})^{\frac{1}{\epsilon}}$, we have $N_{i}\geq C(\frac{n}{k^{i}})^{t}$. Indeed, remembering that $k^{i}< n$, we can write
$$N_{i}\geq \frac{N}{k^{(2+\epsilon)i}}\geq \frac{c n^{t+2\epsilon}}{k^{(2+\epsilon)i}}=c\left(\frac{n}{k^{i}}\right)^{t}n^{2\epsilon}k^{(t-2-\epsilon)i}\geq c n^{\epsilon}\left(\frac{n}{k^{i}}\right)^{t}\geq C\left(\frac{n}{k^{i}}\right)^{t}.$$
Hence, we can apply Lemma \ref{lemma:densityincrementsymmetric} to find an $\frac{n}{k^{i+1}}\times \frac{n}{k^{i+1}}$ sized submatrix $M_{i+1}$ of $M_{i}$ with the following property: if $N_{i+1}$ denotes the number of copies of $K_{t,t}$ in $M_{i+1}$, then $N_{i+1}\geq \frac{(t!)^{2}}{16 (rs)^{t-1} t^{2t}}\cdot\frac{N_{i}}{k^{2}}$. But we chose $k$ such that $k^{\epsilon} \ge \frac{16 (rs)^{t-1} t^{2t}}{(t!)^{2}}$, so we have $N_{i+1}\geq \frac{N_{i}}{k^{2+\epsilon}}\geq \frac{N}{k^{(2+\epsilon)(i+1)}}$, satisfying the desired properties.

But then we arrive to a contradiction: the size of the matrix  $M_{z}$ is $1\times 1$, but $M_{z}$ contains at least $N_{z}\geq \frac{N}{k^{z(2+\epsilon)}}\geq c n^{\epsilon}$ copies of $K_{t,t}$, which is clearly impossible.

\section{Column-$t$-partite matrices revisited -- Proof of Theorem \ref{thm:mainthm}}\label{sect:mainthm}

Similarly as in the proof of Theorem \ref{thm:simplerversion}, given an $n\times n$ sized matrix $M$ of large weight not containing $A$, we define a sequence of matrices $M=M_{0},M_{1},\dots$ such that the size of $M_{i}$ is $\frac{n}{k^{i}}\times n$ with some constant $k$. However, instead of counting copies of $K_{t,t}$ in $M_{i}$, we will count copies of $K_{u,t}$, where $u$ depends on $i$. Indeed, choosing the largest $u$ such that the number of copies of $K_{u,t}$ in $M_{i}$ is $\Omega((\frac{n}{k^{i}})^{u})$, we can apply Lemma \ref{lemma:densityincrement}, and it leads to an improvement over Theorem \ref{thm:simplerversion}.

With the help of the following lemma, we can relate the number of copies of $K_{u,t}$ and $K_{u+1,t}$.

\begin{lemma}\label{lemma:steppingup}
	Let $u$ be a positive integer. Let $M$ be an $m\times n$ size matrix and let $N$ be the number of copies of $K_{u,t}$ in $M$. If $N\geq 2\binom{n}{t}$, then the number of copies of $K_{u+1,t}$ is at least $\frac{1}{2}N^{\frac{u+1}{u}}n^{-\frac{t}{u}}$.
\end{lemma}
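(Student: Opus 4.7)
For each $t$-subset $J \in \binom{[n]}{t}$ of columns, let $d(J)$ denote the number of rows $i$ of $M$ with $M(i,j)=1$ for every $j \in J$; then
\[
N \;=\; \sum_{J \in \binom{[n]}{t}} \binom{d(J)}{u} \qquad\text{and}\qquad N' \;=\; \sum_{J \in \binom{[n]}{t}} \binom{d(J)}{u+1}.
\]

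My plan is to combine three ingredients. First, since $\binom{d}{u+1}=0$ and $\binom{d}{u} \leq 1$ whenever $d \leq u$, the summands with $d(J) \leq u$ contribute at most $\binom{n}{t}$ to $N$; thus the hypothesis $N \geq 2\binom{n}{t}$ forces
\[
\sum_{d(J) \geq u+1}\binom{d(J)}{u} \;\geq\; \tfrac{N}{2}.
\]
Second, the identity $\binom{d}{u+1} = \tfrac{d-u}{u+1}\binom{d}{u}$ together with the bound $\binom{d}{u}^{1/u} \leq d/(u!)^{1/u}$ implies, after a short monotonicity check of the ratio $\binom{d}{u+1}/\binom{d}{u}^{(u+1)/u}$ in $d$, the pointwise inequality
\[
\binom{d}{u+1} \;\geq\; c_u\,\binom{d}{u}^{(u+1)/u} \qquad \text{for every integer } d \geq u+1,
\]
with an explicit constant $c_u > 0$ (the minimum being attained at $d=u+1$). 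Third, the power-mean inequality (equivalently, Jensen's inequality applied to the convex function $y \mapsto y^{(u+1)/u}$) over the at most $K := \binom{n}{t}$ summands with $d(J) \geq u+1$ yields
\[
\sum_{d(J)\geq u+1}\binom{d(J)}{u}^{(u+1)/u} \;\geq\; K^{-1/u}\bigl(N/2\bigr)^{(u+1)/u} \;\geq\; n^{-t/u}\bigl(N/2\bigr)^{(u+1)/u}.
\]
Multiplying by $c_u$ produces $N' \geq c_u\cdot 2^{-(u+1)/u}\,N^{(u+1)/u}\,n^{-t/u}$, and after absorbing the remaining constants --- possibly using the tighter estimate $K \leq n^t/t!$ to extract an extra factor $(t!)^{1/u}$ --- the prefactor becomes $\tfrac{1}{2}$ as claimed.

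The main obstacle is bookkeeping of the constants, which are borderline tight: $c_u \approx (u+1)^{-(u+1)/u}$, the restriction to large $d(J)$ costs $2^{-(u+1)/u}$, and the refinement $\binom{n}{t} \leq n^t/t!$ contributes $(t!)^{1/u}$. The hypothesis $N \geq 2\binom{n}{t}$ is calibrated precisely so that the small-$d(J)$ summands contribute at most half of $N$, leaving just enough slack to deduce the final $\tfrac{1}{2}$ factor; the $u=1$ case can alternatively be handled by a direct Cauchy--Schwarz bound $\sum_J d(J)^2 \geq N^2/K$, which sidesteps the pointwise inequality entirely and serves as a useful sanity check on the constants.
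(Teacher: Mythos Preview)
Your approach mirrors the paper's: both write $N=\sum_{J}\binom{d(J)}{u}$ and $N'=\sum_{J}\binom{d(J)}{u+1}$ over $t$-subsets $J$ of columns, invoke a pointwise inequality of the shape $\binom{d}{u+1}\ge c_u\binom{d}{u}^{(u+1)/u}$, and then apply Jensen. The only cosmetic difference is that the paper handles the terms with small $d(J)$ via a uniform ``$-1$'' correction valid for all $d$, whereas you first discard those terms at the cost of replacing $N$ by $N/2$.

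The genuine gap is your final sentence: the constants do \emph{not} absorb to $\tfrac12$. With $c_u=(u+1)^{-(u+1)/u}$ your bound reads
\[
N'\;\ge\;(u+1)^{-(u+1)/u}\cdot 2^{-(u+1)/u}\cdot(t!)^{1/u}\cdot N^{(u+1)/u}n^{-t/u},
\]
and for fixed $t$ this prefactor is of order $1/u$, not $\tfrac12$. In fact the lemma is false as stated. Take $u=t=2$ and let $M$ be the $3\binom{n}{2}\times n$ matrix whose rows are the characteristic vectors of all pairs in $\binom{[n]}{2}$, each listed three times. Then $d(J)=3$ for every $J$, so $N=3\binom{n}{2}\ge 2\binom{n}{2}$ and $N'=\binom{n}{2}$, whereas $\tfrac12 N^{3/2}n^{-1}=\tfrac{3\sqrt3}{2}\binom{n}{2}^{3/2}/n>\binom{n}{2}$ for every $n\ge 2$. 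The paper's own proof has the same slip --- it derives $\binom{a}{u+1}>\frac{1}{(u+1)^{2}}\binom{a}{u}^{(u+1)/u}$ and then silently drops the factor $(u+1)^{-2}$ in the next displayed line. Either argument honestly yields $N'\ge c(u+1)^{-2}N^{(u+1)/u}n^{-t/u}$ for an absolute constant $c$, and this weaker statement is all the subsequent application needs, since the constant depends only on $u$ and is absorbed into $n^{o(1)}$.
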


\begin{proof}
	For $1\leq j_{1}<\dots<j_{t}\leq n$, let $n(j_{1},\dots,j_{t})$ be the number of rows $i\in[m]$ such that $M(i,j_{1})=\dots=M(i,j_{t})=1$. Then 
	$$N=\sum_{1\leq j_{1}<\dots<j_{t}\leq n}\binom{n(j_{1},\dots,j_{t})}{u}.$$
	Let $a$ be an arbitrary positive integer. If $a\geq u+1$, then $a> \binom{a}{u}^{\frac{1}{u}}$ and
	$$\binom{a}{u+1}=\binom{a}{u}\frac{a-u}{u+1}\geq \binom{a}{u}\frac{a}{(u+1)^{2}}>\frac{1}{(u+1)^{2}}\binom{a}{u}^{\frac{u+1}{u}}.$$
	Also, if $a\leq u$, then $\binom{a}{u+1}>\frac{1}{(u+1)^{2}}\binom{a}{u}^{\frac{u+1}{u}}-1$. But then the number of copies of $K_{u+1,t}$ in $M$ is	
	$$\sum_{1\leq j_{1}<\dots<j_{t}\leq n}\binom{n(j_{1},\dots,j_{t})}{u+1}>\sum_{1\leq j_{1}<\dots<j_{t}\leq n}\binom{n(j_{1},\dots,j_{t})}{u}^{\frac{u+1}{u}}-1.$$
	Using the convexity of the function $f(x)=x^{\frac{u+1}{u}}$, the left hand side is at least
	$$\binom{n}{t}\left(\frac{1}{\binom{n}{t}}\sum_{1\leq j_{1}<\dots<j_{t}\leq n}\binom{n(j_{1},\dots,j_{t})}{u}\right)^{\frac{u+1}{u}}-\binom{n}{t}=\binom{n}{t}^{-\frac{1}{u}}N^{\frac{u+1}{u}}-\binom{n}{t}.$$
	Using that $N\geq 2\binom{n}{t}$, we can write	
	$$\binom{n}{t}^{-\frac{1}{u}}N^{\frac{u+1}{u}}-\binom{n}{t}>\frac{1}{2}\binom{n}{t}^{-\frac{1}{u}}N^{\frac{u+1}{u}}>\frac{1}{2}n^{-\frac{t}{u}}N^{\frac{u+1}{u}}.$$
\end{proof}

In the rest of this section, $r\times s$ is the size of $A$, and $\epsilon>0$ is a fixed real number. Also, fix the following parameters: let $\epsilon_{0}=\frac{\epsilon}{10t^{2}}$, $U=\lceil \frac{10t}{\epsilon_{0}}\rceil$, $\delta=\frac{\epsilon}{10U}$, $k=\lceil(\frac{U!}{4r^{U-1}U^{U-1}})^{\frac{1}{\delta}}\rceil$, and $$C=\max_{u: t\leq u\leq U}C(t,u,r,s,k),$$ where $C(t,u,r,s,k)$ is the constant defined in Lemma \ref{lemma:densityincrement}. Finally, if we say $n$ is sufficiently large, we mean that $n$ is larger than some function of the previously described parameters.

We prove that if $n$ is sufficiently large, then $\ex(n,A)\leq n^{2-\frac{1}{t}+\frac{1}{2t^{2}}+\epsilon}$. Suppose that there exists infinitely many $n$ such that $\ex(n,A)> n^{2-\frac{1}{t}+\frac{1}{2t^{2}}+\epsilon}$. Then, by a similar argument as before, there are infinitely many positive integers $n$ such that $n$ is a power of $k^{U!}$ and $\ex(n,A)> n^{2-\frac{1}{t}+\frac{1}{2t^{2}}+\frac{\epsilon}{2}}$.

Therefore, let $n=k^{z}$, where $z$ is some integer divisible by $U!$ such that $\ex(n,A)\geq n^{2-\frac{1}{t}+\frac{1}{2t^{2}}+\frac{\epsilon}{2}}$. Let $M$ be an $n \times n$ matrix that does not contain $A$ such that $w(M)=\ex(n,A)$.

 Define the decreasing sequence of positive real numbers $\lambda_{t},\lambda_{t+1},\dots,\lambda_{U},\lambda_{U+1}$ as follows. Let $\lambda_{t}=1$, $\lambda_{t+1}=1-\frac{1}{2(t+1)}+\epsilon_{0}$, $\lambda_{U+1}=0$, and set $$\lambda_{u+1}=\lambda_{u}-\frac{t}{(u-1)(u+1)}$$ for $u=t+1,\dots,U-1$. Using the identity $\frac{t}{(u-1)(u+1)}=\frac{t}{2(u-1)}-\frac{t}{2(u+1)},$ one can easily calculate that $$\lambda_{u}=\lambda_{t+1}-\frac{t}{2t}-\frac{t}{2(t+1)}+\frac{t}{2(u-1)}+\frac{t}{2u}=\epsilon_{0}+\frac{t}{2(u-1)}+\frac{t}{2u}$$
 for $u=t+1,\dots,U$. Say that a positive integer $i$ is a \emph{jump}, if there exists $u$ such that $t+1\leq u\leq U$ and $i=z-z\lambda_{u}$. Note that as $z$ is divisible by $U!$, $z\lambda_{u}$ is an integer, so every $z-z\lambda_{u}$ is a jump. Also, if $0\leq i\leq z$, say that $i$ is \emph{type-$u$} if $z-z\lambda_{u}\leq i\leq z-z\lambda_{u+1}$. Note that if $i$ is not a jump, then $i$ has a unique type, but if $i$ is a jump and $i=z-z\lambda_{u}$, then $i$ is both type-$u$ and type-$(u-1)$.

 Let $N$ be the number of copies of $K_{t,t}$ in $M$. We construct a sequence of matrices $M=M_{0},M_{1},\dots,M_{z}$ with the following properties:
 \begin{enumerate}
 	\item $M_{i}$ is a submatrix of $M$ of size $\frac{n}{k^{i}}\times n$.
 	\item Let $N_{u,i}$ denote the number of copies of $K_{u,t}$ in $M_{i}$. If $t+1\leq u\leq U$ and $i$ is type-$u$, then  
 	$$N_{u,i}\geq \frac{n^{u\lambda_{u}+\epsilon}}{k^{(1+\delta)(i-z+z\lambda_{u})}},$$
 	and if $i$ is type-$t$, then
 	$$N_{t,i}\geq \frac{N}{k^{(1+\delta)i}}.$$
 \end{enumerate} 
 
 We construct the sequence $M=M_{0},M_{1},\dots, M_{z}$ by recursion on $i$. In the base case $i=0$, we can apply Lemma \ref{lemma:counting} to get 
 $$N_{t,0}=N\geq c \frac{w(M)^{t^{2}}}{n^{2t^{2}-2t}}\geq c n^{t+\frac{1}{2}+2\epsilon},$$
 where $c=t^{-t^{2}-t}$, so $M_{0}$ satisfies the desired properties. 
 
 Now suppose that we constructed $M_{i}$ with the desired properties for $0\leq i<z$, then we construct $M_{i+1}$ as follows. Let $u$ be the unique integer such that $t\leq u\leq U$ and $z\lambda_{u+1}< z-i \leq z\lambda_{u}$. We would like to apply  Lemma \ref{lemma:densityincrement} to $M_{i}$ to find an $\frac{n}{k^{i+1}}\times n$ sized submatrix of $M_{i}$ with many copies of $K_{u,t}$, but for this, we need to verify that the conditions of Lemma \ref{lemma:densityincrement} are satisfied. That is, we need to show that:
 
 \begin{claim}\label{claim:analysis}
 $N_{u,i}\geq C\max\{(\frac{n}{k^{i}})^{u},n^{t}\}$.
\end{claim}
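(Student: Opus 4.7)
The plan is to verify $N_{u,i}\geq C\max\{(n/k^i)^u,n^t\}$ case by case on the value of $u$, with the main work being a calculation that exploits the specific formula for $\lambda_u$.

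First, I would handle the special case $u=t$. Here $i$ is type-$t$, so the inductive hypothesis gives $N_{t,i}\geq N/k^{(1+\delta)i}$, and the base bound $N\geq cn^{t+1/2+2\epsilon}$ from Lemma \ref{lemma:counting} (as in the derivation of \eqref{eq:supersaturation}) applies. Since the claim forces $i<z-z\lambda_{t+1}=z(\tfrac{1}{2(t+1)}-\epsilon_{0})$ and $n=k^{z}$, one has $k^{(1+\delta)i}\leq n^{(1+\delta)(\tfrac{1}{2(t+1)}-\epsilon_{0})}<n^{\tfrac{1}{2(t+1)}}$, using that $\epsilon_{0}$ dominates $\delta/(t+1)$ (which follows from $U\geq 10t/\epsilon_{0}$ and the choices $\delta=\epsilon/(10U)$, $\epsilon_{0}=\epsilon/(10t^{2})$). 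Therefore $N_{t,i}\geq cn^{t+1/2-1/(2(t+1))+2\epsilon}\geq Cn^{t}$ for $n$ large; and since $k^{i}\geq 1$, $\max\{(n/k^{i})^{t},n^{t}\}=n^{t}$, settling this case.

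Next, for $u\in\{t+1,\dots,U\}$ I would set $j=i-z+z\lambda_{u}$, so $0\leq j\leq z(\lambda_{u}-\lambda_{u+1})$, and the inductive hypothesis becomes $N_{u,i}\geq n^{u\lambda_{u}+\epsilon}/k^{(1+\delta)j}$. Writing $k^{i}=n^{1-\lambda_{u}}k^{j}$ yields $(n/k^{i})^{u}=n^{u\lambda_{u}}/k^{uj}$, so
\[
\frac{N_{u,i}}{(n/k^{i})^{u}}\geq n^{\epsilon}k^{j(u-1-\delta)}\geq n^{\epsilon},
\]
because $u-1-\delta>0$ for $u\geq t+1\geq 3$. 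This is at least $C$ for $n$ large, yielding $N_{u,i}\geq C(n/k^{i})^{u}$.

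The substantive step is the bound $N_{u,i}\geq Cn^{t}$. Using $j\leq z(\lambda_{u}-\lambda_{u+1})$ gives $k^{(1+\delta)j}\leq n^{(1+\delta)(\lambda_{u}-\lambda_{u+1})}$, so it suffices to show that $u\lambda_{u}-t-(1+\delta)(\lambda_{u}-\lambda_{u+1})+\epsilon$ is bounded below by a positive absolute constant. For $u\in\{t+1,\dots,U-1\}$, plugging in $\lambda_{u}=\epsilon_{0}+\tfrac{t}{2(u-1)}+\tfrac{t}{2u}$ and $\lambda_{u}-\lambda_{u+1}=\tfrac{t}{(u-1)(u+1)}$, a direct calculation gives $u\lambda_{u}-t=u\epsilon_{0}+\tfrac{t}{2(u-1)}$ and
\[
u\lambda_{u}-t-(\lambda_{u}-\lambda_{u+1})=u\epsilon_{0}+\frac{t}{2(u-1)}-\frac{t}{(u-1)(u+1)}=u\epsilon_{0}+\frac{t}{2(u+1)}>0,
\]
which is at least $t\epsilon_{0}$; the extra $\delta(\lambda_{u}-\lambda_{u+1})$ is $O(\delta)\ll t\epsilon_{0}$ by the choice of $\delta$. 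For the boundary case $u=U$ where $\lambda_{U+1}=0$, the analogous computation reduces to $(U-1-\delta)\lambda_{U}-t=(U-1)\epsilon_{0}-\tfrac{t}{2U}-\delta\lambda_{U}$, and since $U\geq 10t/\epsilon_{0}$ one has $U(U-1)\epsilon_{0}\gg t/2$, keeping the quantity positive and bounded away from $0$.

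The main obstacle is this last computation: the recursion $\lambda_{u+1}=\lambda_{u}-t/((u-1)(u+1))$ together with the anchor $\lambda_{t+1}=1-1/(2(t+1))+\epsilon_{0}$ was engineered precisely so that $u\lambda_{u}-t-(\lambda_{u}-\lambda_{u+1})$ remains equal to $u\epsilon_{0}+t/(2(u+1))$, a positive quantity that gives the density of $K_{u,t}$'s the right slack to survive one block-halving step. Verifying compatibility at the boundary $u=U$ and that the error term introduced by the factor $(1+\delta)$ is absorbed by the choice $\delta\ll\epsilon_{0}$ is the delicate bookkeeping at the heart of the claim.
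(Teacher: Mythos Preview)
Your proposal is correct and follows essentially the same approach as the paper: the same case split on $u=t$, $t+1\leq u\leq U-1$, and $u=U$, using the inductive hypothesis together with the explicit formula $\lambda_u=\epsilon_0+\tfrac{t}{2(u-1)}+\tfrac{t}{2u}$ and the range $0\leq i-z+z\lambda_u<z(\lambda_u-\lambda_{u+1})$. The only cosmetic difference is that in the middle range you compute the exact identity $u\lambda_u-t-(\lambda_u-\lambda_{u+1})=u\epsilon_0+\tfrac{t}{2(u+1)}$, whereas the paper drops the $u\epsilon_0$ term and bounds $u\lambda_u-t-\tfrac{(1+\delta)t}{(u-1)(u+1)}\geq \tfrac{t}{2(u-1)}\bigl(1-\tfrac{2(1+\delta)}{u+1}\bigr)\geq \tfrac{t}{10U}$; both arrive at a positive exponent margin absorbing the constant $C$.
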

\begin{proof}
 First, we show that $N(u,i)\geq Cn^{t}$. Here, we consider three cases.
 
 \begin{description}
 	\item[Case 1.] $u=t$.
 	We have
 	$$N_{t,i}\geq \frac{N}{k^{(1+\delta)i}}\geq c \frac{n^{t+\frac{1}{2}+2\epsilon}}{k^{i(1+\delta)}}.$$
 	Here, $i\leq z-z\lambda_{t+1}=z(\frac{1}{2(t+1)}-\epsilon_{0})$. Therefore, the right hand side is at least
 	$$ c n^{t+\frac{1}{2}+2\epsilon-(1+\delta)(\frac{1}{2(t+1)}-\epsilon_{0})}>c n^{t+\frac{1}{2}-\frac{1}{t+1}}\geq c n^{t+\frac{1}{6}}.$$
 	Hence, if $n$ is sufficiently large, then $N_{t,i}\geq Cn^{t}$.
 	
 	\item[Case 2.]$t+1\leq u\leq U-1$.
 	
 	Note that $i-z+z\lambda_{u}<z(\lambda_{u}-\lambda_{u+1})=z\frac{t}{(u-1)(u+1)}$.  Hence,
 	$$N_{u,i}\geq \frac{n^{u\lambda_{u}}}{k^{(1+\delta)(i-z+z\lambda_{u})}}\geq n^{u\lambda_{u}-(1+\delta)\frac{t}{(u-1)(u+1)}}.$$
 	Here, using that $\lambda_{u}\geq \frac{t}{2(u-1)}+\frac{t}{2u}$, we have $$u\lambda_{u}-\frac{(1+\delta)t}{(u-1)(u+1)}\geq\frac{ut}{2(u-1)}+\frac{t}{2}-\frac{(1+\delta)t}{(u-1)(u+1)}= t+\frac{t}{2(u-1)}\left(1-\frac{2(1+\delta)}{u+1}\right)\geq t+\frac{t}{10U}.$$
 	Therefore, $N_{u,i}\geq n^{t+\frac{t}{10U}}$, so if $n$ is sufficiently large, we have $N_{u,i}\geq Cn^{t}$.
 	
 	\item[Case 3.] $u=U$.
 	
 	Here, we have $i-z+z\lambda_{U}\leq z\lambda_{U}$, so
 	$$N_{U,i}\geq \frac{n^{U\lambda_{U}}}{k^{(1+\delta)(i-z+z\lambda_{U})}}\geq n^{(U-1-\delta)\lambda_{U}}.$$
 	Again, using that $\lambda_{U}=\epsilon_{0}+\frac{t}{2(U-1)}+\frac{t}{2U}$, we can write
 	$$(U-1-\delta)\lambda_{U}>(U-1)\lambda_{U}-\delta=t-\frac{t}{2U}+\epsilon_{0}(U-1)-\delta>t+\epsilon_{0}.$$
 	Hence, $N_{U,i}\geq n^{t+\epsilon_{0}}>Cn^{t}$ if $n$ is sufficiently large.	
 	
 \end{description}

 Now, we show that $N_{u,i}\geq C(\frac{n}{k^{i}})^{u}$. Consider two cases.
 	
 \begin{description}
 	\item[Case 1.]  $u=t$.
 	
 	We have already proved that $N_{t,i}\geq Cn^{t}$, so $N_{t,i}\geq C(\frac{n}{k^{i}})^{t}$ follows immediately.
 	
 	\item[Case 2.] $t+1\leq u\leq U$.
 	
 	We have
 	$$N_{u,i}\geq \frac{n^{u\lambda_{u}+\epsilon}}{k^{(1+\delta)(i-z+z\lambda_{u})}}=\left(\frac{n}{k^{i}}\right)^{u}n^{\epsilon}\left(\frac{k^{i}}{n^{(1-\lambda_{u})}}\right)^{u-1-\delta}\geq \left(\frac{n}{k^{i}}\right)^{u}n^{\epsilon}.$$
 	Hence, if $n$ is sufficiently large, we have $N_{u,i}\geq C(\frac{n}{k^{i}})^{u}$.
 \end{description}

 \end{proof}

 Therefore, we can apply Lemma \ref{lemma:densityincrement} to find an $\frac{n}{k^{i+1}}\times n$ sized submatrix $M_{i+1}$ of $M_{i}$ such that the number of copies of $K_{u,t}$ in $M_{i+1}$ is at least
 $$N_{u,i+1}\frac{u!}{4r^{u-1}u^{u-1}}\cdot\frac{N_{u,i}}{k}\geq \frac{N_{u,i}}{k^{1+\delta}}\geq \begin{cases} n^{u\lambda_{u}+\epsilon}k^{-(1+\delta)(i+1-z+z\lambda_{u})} &\mbox{ if }t+1\leq u\leq U\\
 Nk^{-(1+\delta)(i+1)} &\mbox{ if }u=t\end{cases},$$
 where the first inequality holds by the choice of $k$, and the second inequality holds by the induction hypothesis. Therefore $M_{i+1}$ satisfies the desired properties if $i+1$ is not a jump. However, if $i+1$ is a jump, that is $i+1=z-z\lambda_{u+1}$, we also have to verify that for the number of copies of $K_{u+1,t}$ in $M_{i+1}$ we have $N_{u+1,i+1}\geq n^{(u+1)\lambda_{u+1}+\epsilon}$. But this follows from Lemma \ref{lemma:steppingup}. Indeed, by Claim \ref{claim:analysis}, we have $N_{u,i+1}>Cn^{t}>2\binom{n}{t}$, so we can apply Lemma \ref{lemma:steppingup}. Consider two cases. 
 
 \begin{description}
 	\item[Case 1.] $t+1\leq u\leq U$.
 	
 	We have $$N_{u,i+1}\geq \frac{n^{u\lambda_{u}+\epsilon}}{k^{(1+\delta)(i+1-z+z\lambda_{u})}}=\frac{n^{u\lambda_{u}+\epsilon}}{k^{(1+\delta)z(\lambda_{u}-\lambda_{u+1})}}=n^{u\lambda_{u}+\epsilon-(1+\delta)(\lambda_{u}-\lambda_{u+1})},$$
 	where the exponent is
 	\begin{align*}
 	u\lambda_{u}+\epsilon-(1+\delta)(\lambda_{u}-\lambda_{u+1})&>(u-1)\lambda_{u}+\lambda_{u+1}+\epsilon-\delta\\
 	&=u\lambda_{u+1}-\frac{t}{u+1}+\epsilon-\delta\\
 	&>u\lambda_{u+1}-\frac{t}{u+1}+\left(1-\frac{1}{2(u+1)}\right)\epsilon
 	\end{align*}
 	Therefore, by Lemma \ref{lemma:steppingup}, we get
 	$$N_{u+1,i+1}\geq \frac{1}{2}N_{u,i+1}^{\frac{u+1}{u}}n^{-\frac{t}{u}}>\frac{1}{2}n^{(u+1)\lambda_{u+1}+\frac{2u+1}{2u}\epsilon}.$$
 	Hence, if $n>2^{\frac{2u}{\epsilon}}$, we get $N_{u+1,i+1}\geq n^{(u+1)\lambda_{u+1}+\epsilon}$, so $M_{i+1}$ truly satisfies the desired properties.
 	
 	\item[Case 2.] $u=t$.
 	
 	We have $$N_{t,i+1}\geq \frac{N}{k^{(1+\delta)(i+1)}}=\frac{N}{n^{(1+\delta)(1-\lambda_{t+1})}}\geq c n^{t+\frac{1}{2}+2\epsilon-(1+\delta)(1-\lambda_{t+1})},$$
 	where the exponent is
 
 	$$t+\frac{1}{2}+2\epsilon-(1+\delta)(1-\lambda_{t+1})>t+\frac{1}{2}+2\epsilon-\frac{1}{2(t+1)}+\epsilon_{0}-\delta>t+\frac{1}{2}+2\epsilon-\frac{1}{2(t+1)}.$$

 	Therefore, by Lemma \ref{lemma:steppingup}, we get
 	$$N_{t+1,i+1}\geq \frac{1}{2}N_{t,i+1}^{\frac{t+1}{t}}n^{-1}>\frac{1}{2}c^{\frac{t+1}{t}}n^{\frac{2t+1}{2}+\frac{2(t+1)}{t}\epsilon}>\frac{1}{2}c^{\frac{t+1}{t}}n^{(t+1)\lambda_{t+1}+2\epsilon},$$
 	where the last inequality holds noting that $(t+1)\epsilon_{0}<\frac{2}{t}\epsilon$. Hence, if $n$ is sufficiently large,  we get $N_{t+1,i+1}\geq n^{(t+1)\lambda_{t+1}+\epsilon}$, so $M_{i+1}$ truly satisfies the desired properties.
 \end{description}
 
 Therefore, we managed to construct the sequence of matrices $M_{0},M_{1},\dots, M_{z}$ satisfying properties 1. and 2. But then we arrive to a contradiction: the size of the matrix $M_{z}$ is $1\times  n$, but it contains more than $Cn^{t}$ copies of $K_{U,t}$ by Claim \ref{claim:analysis}, which is clearly impossible. This finishes the proof of Theorem \ref{thm:mainthm}.

\section{Ordered cycles}\label{sect:cycles}

If $A$ is a zero-one matrix whose corresponding ordered graph is a cycle, then call $A$ a \emph{cycle} as well. If $A$ is a zero-one matrix, connect two $1$-entries of $A$ by a segment if they are in the same row or column; call this the \emph{drawing of $A$}. More precisely, if $A(i,j)=1$, we imagine this $1$-entry as the point $(i,j)\in \mathbb{R}^{2}$, and so the drawing of $A$ is a subset of the plane composed of vertical and horizontal segments connecting the corresponding points of the $1$-entries. Note that if $A$ is a cycle, then the drawing of $A$ is a closed polygonal curve (possibly self-intersecting).

Now we  define two families of cycles. The matrix $A$ is an \emph{$x$-monotone cycle}, if $A$ is a cycle and every vertical line intersects at most two horizontal segments in the drawing of $A$. Also, $A$ is a \emph{positive cycle}, if the following holds. Direct the closed polygonal curve in the drawing of $A$, then $A$ is a positive cycle if every point on the plane is encircled a non-negative amount of times by the drawing of $A$ (we refer the reader to the paper of Pach and Tardos \cite{PT06} for a formal definition). We remark that every $2\times 2$-partite matrix that is a cycle is a positive cycle.

 Pach and Tardos \cite{PT06} noticed that zero-one matrices with no positive cycles correspond to incidence graphs of points and pseudo-lines, and they deduced the following theorem.

\begin{theorem}\label{thm:positivecycle}(Pach, Tardos \cite{PT06})
	If $\mathcal{C}$ is the family of positive cycles, then $\ex(n,\mathcal{C})=\Theta(n^{\frac{4}{3}})$.
\end{theorem}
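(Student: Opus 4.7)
The plan is to reduce this theorem to the known Szemerédi-Trotter-type bound on the number of incidences between points and pseudolines. The crucial input, already stated in the excerpt, is the observation of Pach and Tardos identifying the $n \times n$ zero-one matrices that contain no positive cycle as exactly the $n \times n$ incidence matrices of $n$ points and $n$ pseudolines in the plane. First I would recall (or cite in detail from \cite{PT06}) this correspondence: given a point-pseudoline arrangement, label the points $p_{1},\dots,p_{n}$ in the order of, say, increasing $x$-coordinate (indexing the rows), label the pseudolines $\ell_{1},\dots,\ell_{n}$ in the order in which they cross a common vertical reference line far to the left (indexing the columns), and set $M(i,j)=1$ iff $p_{i}\in\ell_{j}$. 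The positivity condition on cycles captures, combinatorially, that every pair of pseudolines crosses at most once and that the arrangement is realizable in the plane; conversely, Pach and Tardos show that any matrix avoiding all positive cycles can be realized in this way.

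For the upper bound, once the correspondence is established, the statement is equivalent to bounding the maximum number of incidences between $n$ points and $n$ pseudolines. The Szemerédi-Trotter theorem for pseudolines (Pach-Sharir) yields $O((mn)^{2/3}+m+n)$ incidences between $m$ points and $n$ pseudolines, which for $m=n$ gives $O(n^{4/3})$. Since the weight of $M$ equals the number of incidences in the associated arrangement, we conclude that $\ex(n,\mathcal{C})=O(n^{4/3})$.

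For the matching lower bound I would exhibit the classical point-line construction (due to Erd\H{o}s, sharpened by Elekes) giving $\Omega(n^{4/3})$ incidences between $n$ points and $n$ lines: for instance, take the points of an appropriate $\sqrt{n}\times\sqrt{n}$ sub-grid and all lines of the form $y=ax+b$ with slopes and intercepts in bounded arithmetic progressions, producing $\Theta(\sqrt{n})$ collinear points on each of $\Theta(n)$ lines. Since lines are in particular pseudolines, ordering the points by $x$-coordinate (rows) and the lines by slope (columns) yields an $n\times n$ matrix of weight $\Omega(n^{4/3})$ whose incidence structure is realized by a genuine line arrangement and therefore contains no positive cycle.

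The only nontrivial step is the correspondence between positive-cycle-free matrices and pseudoline arrangements; the remaining ingredients are off-the-shelf. The main obstacle, then, is checking the Pach-Tardos realization theorem: given a matrix with no positive cycle, one must stitch together local realizations of its $1$-entries into a globally consistent pseudoline arrangement, and conversely verify that any positive cycle would force a forbidden topological configuration (such as two pseudolines meeting twice) in the plane. I would handle this by following the proof in \cite{PT06} verbatim and then invoke the pseudoline Szemerédi-Trotter bound as a black box.
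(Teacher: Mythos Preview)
The paper does not give its own proof of this theorem: it is stated as a result of Pach and Tardos \cite{PT06}, with the only explanation being the sentence immediately preceding it, namely that ``zero-one matrices with no positive cycles correspond to incidence graphs of points and pseudo-lines.'' Your proposal is precisely an elaboration of that sentence---the pseudoline correspondence together with the Szemer\'edi--Trotter bound for pseudolines for the upper bound, and a standard grid construction for the lower bound---so it is the same approach the paper attributes to \cite{PT06}.
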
 

Hence, there are cycles of arbitrary length whose corresponding zero-one matrix has extremal number $\Omega(n^{\frac{4}{3}})$. However, Theorem \ref{thm:positivecycle} does not yield any upper bound on the extremal number of a single positive cycle $A$. In this section, we prove that for a large number of cycles their extremal number is $O(n^{\frac{3}{2}})$, which is sharp at least for the $2\times 2$ sized all-1 matrix. 

\begin{theorem}\label{thm:cycle}
	Let $A$ be an $x$-monotone cycle, then there exists a constant $c=c(A)$ such that  $$\ex(n,A)<cn^{\frac{3}{2}}.$$ 
\end{theorem}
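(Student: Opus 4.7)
My plan is to adapt the classical K\H{o}v\'ari-S\'os-Tur\'an counting argument, which gives $\ex(n,K_{2,2}) = O(n^{3/2})$, to the more general setting of $x$-monotone cycles via a two-chain decomposition of $A$. First, I would exploit the $x$-monotonicity to $2$-color the columns of $A$: the interval $[r, r']$ associated to each column of $A$ (where $r < r'$ are the two rows carrying its two $1$-entries) has the property that at most two such intervals contain any given $x$-value. Hence the interval graph of these column intervals has clique number $2$, and by perfectness of interval graphs it is $2$-colorable, partitioning the columns of $A$ into two chains $C_1$ and $C_2$ whose intervals within each chain are pairwise non-overlapping. Each chain then forms a monotone staircase path traversing $A$ from its leftmost row to its rightmost row (adjacent columns in a chain share the endpoint row of their intervals, which corresponds to an edge of the cycle), and $A$ is obtained by gluing $C_1$ and $C_2$ at these two extreme rows.

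Next, assuming $w(M) \ge c n^{3/2}$ for a sufficiently large constant $c = c(A)$, I would count ordered staircase embeddings of each chain $C_i$ into $M$. For fixed endpoint rows $s < s'$ in $M$, I would lower bound the number of ways to fill in the intermediate rows and columns of a $C_i$-staircase between $s$ and $s'$. By iteratively applying Cauchy-Schwarz / convexity along the chain (exactly as in the standard $K_{2,2}$ proof, but repeated $|C_i|$ times, and preceded by a cleaning step to make row-weights comparable), one obtains a count that grows like a suitably large power of $c$, which can be made as big as desired by choosing $c$ large in terms of $|A|$.

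Once the number of staircase embeddings is sufficiently large, I would apply pigeonhole on the endpoint rows $(s, s')$ and on the endpoint column positions simultaneously for both chains, forcing two embeddings (one of $C_1$ and one of $C_2$) that share the endpoint rows and whose endpoint columns are compatible with the relative ordering dictated by the $x$-monotone drawing of $A$. Combining them yields a copy of $A$ in $M$, contradicting $A$-freeness.

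The main obstacle will be coordinating the orderings across the two chain embeddings: although each chain is easy to embed along its own staircase structure, the final copy of $A$ requires the columns of $C_1$ and $C_2$ to interleave exactly as in $A$'s drawing, and the intermediate rows of the two chains must not collide with the prescribed ordering of $A$'s rows. To handle this I would pre-commit to a partition of the columns of $M$ into blocks, one per column of $A$ in order, and count embeddings only within the appropriate blocks; a similar partition may be needed for rows. These refinements cost only a constant factor that is absorbed into $c(A)$, preserving the $n^{3/2}$ bound.
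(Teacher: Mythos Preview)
Your decomposition of the $x$-monotone cycle $A$ into two row-monotone staircases $C_1$ and $C_2$ (via a $2$-coloring of the column intervals) is correct and is essentially the structural observation behind the paper's inductive embedding lemma. However, the counting argument that follows has a real gap. After pre-partitioning the rows and columns of $M$ into blocks, counting block-respecting embeddings of a staircase $C_i$ means counting walks through a \emph{layered} graph in which consecutive steps live in \emph{different} bipartite blocks. The standard $K_{2,2}$ argument, iterated, lower-bounds $\sum_b d(b)^2$ via convexity; but here the relevant quantity at each pivot column $b$ is a product $d_i(b)\,d_{i+1}(b)$ of its degrees into two distinct row-blocks, and $\sum_b d_i(b)\,d_{i+1}(b)$ admits no useful lower bound in terms of the total weights of the two blocks --- it can simply vanish when the supports are disjoint. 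A cleaning step that equalises row degrees does not help: what is actually needed is \emph{column-wise} balance across the row-blocks, which is a different condition.

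The same lack of balance undermines the final pigeonhole. Even if the total counts of $C_1$- and $C_2$-staircases are both large, nothing prevents them from being supported on disjoint sets of endpoint pairs $(s,s')$, in which case no common pair exists on which to glue the two chains. The paper resolves exactly this issue by a density-increment dichotomy (Lemma~\ref{lemma:denseorbalanced}): either one passes to a denser $\tfrac{n}{k}\times\tfrac{n}{k}$ submatrix, or one finds an $r$-\emph{balanced} submatrix in which every column has the same number of $1$-entries in each of the $r$ row-blocks. Only in this balanced setting does the embedding of $A$ go through (Lemma~\ref{lemma:balanced}, by induction on the number of columns of $A$, peeling one column off at a time using $x$-monotonicity). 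Your proposal is missing precisely this balancedness step, and without it neither the staircase counts nor their combination can be controlled.
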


The only $x$-monotone cycle that is also $2\times 2$-partite is the $2\times 2$ sized all-1 matrix, so in some sense Theorem \ref{thm:cycle} complements the $t=2$ case of Theorem \ref{thm:symmetric}. 

Let us briefly outline the idea of the proof. Let the size of $A$ be $r\times s$. Let $M$ be an $n\times n$ matrix of weight $\Omega(n^{\frac{3}{2}})$. Similarly as before, we divide $M$ into $k$ blocks along its rows (where $k$ is some large constant), and consider the distribution of $1$-entries in these blocks. We prove that either $M$ contains a matrix $N$ of large weight contained in the union of $r$ blocks, where the $1$-entries are equally distributed over the $r$ blocks in every column, or $M$ contains an $\frac{n}{k}\times\frac{n}{k}$ sized matrix $M'$ that is denser than $M$ in some sense. In the first case, we show that $A$ can be embedded into $N$ such that each row of $A$ is embedded into a different block. In the second case, we repeat the argument for $M'$. But then if $M$ does not contain $A$, we get a sequence of matrices of decreasing size but increasing density, and we arrive to a contradiction. Let us show how to execute this argument properly.

 As a reminder, the size of $A$ is $r\times s$. Note that we allow all-0 rows and columns in $A$, so $r$ might not be equal to $s$. Say that an $n\times m$ sized zero-one matrix $M$ is \emph{$r$-balanced}, if $r$ divides $n$ and for every column $c\in [m]$, the number of $1$-entries among $M((i-1)\frac{n}{r}+1,c),\dots,M(i\frac{n}{r},c)$ is the same for $i\in [r]$. In other words, if the column $c$ is divided into $r$ equal intervals, each interval contains the same number of $1$-entries. First, we show how to embed $A$ into an $r$-balanced matrix $M$ of sufficiently large weight.

\begin{lemma}\label{lemma:balanced}
	Let $M$ be an $n\times m$ sized $r$-balanced matrix. If 
	$$w(M)> rs\sqrt{m}n,$$
	then $M$ contains $A$.
\end{lemma}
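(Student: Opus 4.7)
The plan is to embed $A$ via a weighted double count that exploits both the $r$-balanced structure of $M$ and the cyclic structure of $A$. First I will fix notation: write the cycle of $A$ as $R_1-C_1-R_2-C_2-\cdots-R_k-C_k-R_1$, with $R_a$ the active rows of $A$ (each containing two 1-entries, at columns $C_{a-1}$ and $C_a$, indices modulo $k$) and $k\le\min(r,s)$. Since the all-zero rows and columns of $A$ impose no constraints, it suffices to pick, for each $a$, a row $\rho_a$ in block $B_{R_a}$ of $M$ and $k$ columns $\gamma_1,\dots,\gamma_k$ of $M$ appearing in the order dictated by the positions of $C_1,\dots,C_k$ in $A$, such that $M(\rho_a,\gamma_{a-1})=M(\rho_a,\gamma_a)=1$ for every $a$.

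The core step will be a trace identity. For each active row index $i$, define the symmetric positive semidefinite matrix $F_i\in\mathbb{R}^{m\times m}$ by $F_i(c,c')=|\{\rho\in B_i:\ M(\rho,c)=M(\rho,c')=1\}|$; equivalently $F_i=X_i^\top X_i$ where $X_i$ is the $\tfrac{n}{r}\times m$ block submatrix of $M$ on $B_i$. Then the number of (not necessarily order-respecting, possibly column-repeating) tuples satisfying the 1-entry constraints is
\[
N=\sum_{c_1,\dots,c_k\in[m]}\prod_{a=1}^{k} F_{R_a}(c_{a-1},c_a)=\operatorname{tr}(F_{R_1}F_{R_2}\cdots F_{R_k}).
\]
I will bound $N$ from below using the $r$-balanced property—which pins down the diagonals $F_i(c,c)=d(c)/r$ identically across $i$ and yields $\operatorname{tr}(F_i)=w(M)/r$—combined with Cauchy--Schwarz applied iteratively along the cycle. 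Since $F_i=X_i^\top X_i$, we have $\operatorname{tr}(F_i^2)\ge\operatorname{tr}(F_i)^2/m$, and analogous bounds on traces of mixed products should follow by pairing consecutive factors around the cycle. The hypothesis $w(M)>rs\sqrt{m}\,n$ is calibrated precisely so that the resulting lower bound on $N$ comfortably exceeds $m^k$.

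Finally, I will discard degenerate tuples (those with some $c_a=c_b$ for $a\neq b$) and impose the required column order. Each degeneracy contracts the cyclic trace to a shorter product whose contribution carries an extra factor $1/m$ and is therefore of lower order; among non-degenerate tuples the fraction respecting the column order prescribed by $A$ is at least $1/k!$, which still leaves a positive count of embeddings. The $x$-monotone hypothesis enters at this last step: the depth-$2$ interval structure of $A$'s horizontal segments pins down the cyclic permutation type of the columns and ensures that the tuples produced by the counting correspond to genuine $x$-monotone embeddings. The main obstacle I anticipate is the trace lower bound itself, since the matrices $F_i$ could a priori be nearly Frobenius-orthogonal across distinct blocks, so invoking only $\operatorname{tr}(F_i)=w(M)/r$ will not suffice; the uniform column marginals enforced by $r$-balancedness must be leveraged more delicately—for example by controlling $\operatorname{tr}(F_iF_j)$ via an expander-mixing style Cauchy--Schwarz on paired blocks—to prevent $\operatorname{tr}(F_{R_1}\cdots F_{R_k})$ from collapsing.
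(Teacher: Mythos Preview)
Your approach is genuinely different from the paper's---the paper proceeds by induction on $s$, peeling off the leftmost column of $A$ at each step and using the $x$-monotone hypothesis to guarantee that what remains is again an $x$-monotone cycle---but your proposal has two real gaps that I do not see how to close.

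\textbf{The trace lower bound.} You correctly identify this as the main obstacle, and it is fatal as stated. The only estimate you actually justify is $\operatorname{tr}(F_i)=w(M)/r$, which comes from the diagonal entries $F_i(c,c)=d(c)/r$. But the diagonal contribution to $\operatorname{tr}(F_{R_1}\cdots F_{R_k})$ is exactly $\sum_c (d(c)/r)^k$, i.e.\ the term where $c_1=\cdots=c_k$, which is precisely the most degenerate tuple you must later discard. Nothing in the $r$-balanced hypothesis controls the off-diagonal entries $F_i(c,c')$ across different blocks: one can have each block's $X_i$ realise the common column degrees while the codegree patterns $F_i(c,c')$ are supported on pairwise disjoint sets of pairs $(c,c')$, in which case the off-diagonal part of the product trace vanishes. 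An ``expander-mixing style'' bound would require spectral information about the $X_i$ that $r$-balancedness simply does not supply.

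\textbf{The ordering step.} Even granting a good lower bound on $N$, the claim that a $1/k!$ fraction of non-degenerate tuples respects the column order of $A$ is not valid. The trace sums over ordered tuples $(\gamma_1,\dots,\gamma_k)$ with weights $\prod_a F_{R_a}(\gamma_{a-1},\gamma_a)$ that are \emph{not} symmetric under permutations of the $\gamma_a$, because the factors $F_{R_a}$ are different matrices. At best you can conclude that \emph{some} ordering of the columns receives $N/k!$ tuples, but you need the specific ordering dictated by $A$. This is exactly where the $x$-monotone hypothesis should do work, and in the paper it does: it guarantees that removing the leftmost column and rerouting one edge yields a shorter $x$-monotone cycle $A'$, so that the embedding can be built column by column from left to right. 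In your outline the $x$-monotone property is invoked only in a vague final sentence and plays no structural role.
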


\begin{proof}
	Say that a copy of $A$ in $M$ is \emph{proper}, if for each $j=1,\dots,r$, the $j$-th row of $A$ is embedded into one of the rows indexed by $(j-1)\frac{n}{r}+1,\dots,j\frac{n}{r}$ of $M$. We prove by induction on $s$ that $A$ has a proper copy in $M$. Without loss of generality, we can suppose that each column  of $A$ contains exactly two $1$-entries. 
	
	In the base case $s=2$, we prove that if $w(M)>\sqrt{m} n$, then $M$ contains a proper copy of $A$. Suppose that the $1$-entries of $A$ are contained in the rows indexed by $u$ and $v$. (That is, the only 1-entries of $A$ are $A(u,1)$, $A(v,1)$, $A(u,2)$ and $A(v,2)$.) Say that a triple $(a,b,c)\in [n]\times [n]\times [m]$ is \emph{good} if  $M(a,c)=M(b,c)=1$, $(u-1)\frac{n}{r}+1\leq a\leq u\frac{n}{r}$ and $(v-1)\frac{n}{r}+1\leq b\leq v\frac{n}{r}$. If there exists a pair $(a,b)$ and two different columns $c$ and $c'$ such that $(a,b,c)$ and $(a,b,c')$ are both good triples, then $M$ contains a proper copy of $A$. In total, there are $(\frac{n}{r})^{2}$ pairs $(a,b)$ such that $(u-1)\frac{n}{r}+1\leq a\leq u\frac{n}{r}$ and $(v-1)\frac{n}{r}+1\leq b\leq v\frac{n}{r}$, hence there are at most $(\frac{n}{r})^{2}$ good triples if $M$ does not contain a proper copy of $A$. If column $c$ contains $s_{c}$ $1$-entries, then there are $(\frac{s_{c}}{r})^{2}$ good triples containing $c$. Hence, the number of good triples is at least
	
	$$\sum_{c=1}^{m}\left(\frac{s_c}{r}\right)^{2}\geq \frac{1}{m}\left(\sum_{c=1}^{m}\frac{s_c}{r}\right)^{2}= \frac{1}{m}\left(\frac{w(M)}{r}\right)^{2}.$$
	Then $(\frac{n}{r})^{2} \ge \frac{1}{m}(\frac{w(M)}{r})^{2}$, a contradiction.

	Now let $s\geq 3$. Let $a,b\in [k]$ be the two indices such that $A(a,1)=A(b,1)=1$. As $A$ is $x$-monotone, either $A(a,2)=1$ or $A(b,2)=1$, otherwise the vertical line between the second and third columns intersects more than two horizontal lines in the drawing of $A$. Without loss of generality, let $A(a,2)=1$.
	
	Let $A'$ be the $r\times (s-1)$ sized matrix we get from $A$ by deleting the first column of $A$, replacing $A(a,2)$ with a $0$-entry and replacing $A(b,2)$ with a $1$-entry. Then $A'$ is also an $x$-monotone cycle, so by our induction hypothesis, every $r$-balanced $n\times m$ matrix with weight $r(s-1)\sqrt{m}n$ contains a copy of $A'$. Let $Q$ be the $n\times m$ matrix, whose $1$-entries correspond to those $1$-entries of $M$, that form $ A'(b, 1)$ in a proper copy of $A'$ in $M$. As $M$ has weight at least $rs\sqrt{m}n$, we have $w(Q)\geq \sqrt{m}n$. Indeed, suppose to the contrary that  there are less than $\sqrt{m}n$ such $1$-entries. Change these $1$-entries to $0$, and change at most $(r-1)\sqrt{m}n$ other $1$-entries to $0$ to make the matrix $r$-balanced. Then the resulting matrix still contains at least $r(s-1)\sqrt{m}n$ $1$-entries, so it contains a copy of $A'$, contradiction.
	
	Every $1$-entry of $Q$ is contained in the rows $(b-1)\frac{n}{r}+1,\dots,b\frac{n}{r}$, so by changing some of the $1$-entries of $M$ to $0$, we can find a $r$-balanced matrix $Q'$, which agrees on the rows $(b-1)\frac{n}{r}+1,\dots,b\frac{n}{r}$ with $Q$.
	
	Let $B$ be the $r\times 2$ sized matrix with $1$-entries $B(a,1),B(a,2),B(b,1),B(b,2)$. If $M$ contains a copy of $B$ and a copy of $A'$, where the $1$-entries corresponding to $B(b,2)$ and $A'(b,1)$ coincide, then $M$ contains $A$. But there are such copies because $Q'$ is an $r$-balanced matrix with weight at least $r\sqrt{m}n$, so it contains a proper copy of $B$. See Figure \ref{image:decomposing} for an illustration of $A'$ and $B$.
	\begin{figure}
	\begin{center}
		\begin{tikzpicture}
		\matrix[matrix of math nodes, left delimiter={(}, right delimiter={)},label=left:{$A=$~~~}] (A) at (-3,0) {
			1 & 1 &   &   \\
			1 &   &   & 1 \\
			&   & 1 & 1 \\
			& 1 & 1 &  \\
		};
		\draw[thick,opacity=.3] (A-1-2.center) -- (A-1-1.center) -- (A-2-1.center) -- (A-2-4.center) -- (A-3-4.center) -- (A-3-3.center) -- (A-4-3.center) -- (A-4-2.center) -- (A-1-2.center);
		
		\node at (-.5,0) {$\Rightarrow$};
		
		\matrix[matrix of math nodes, left delimiter={(}, right delimiter={)},label=left:{$A'=$~~~}] (B) at (2,0) {
			\textcolor{white}{1}  & \ &  \ \\
			1 & \  & 1 \\
			\  & 1 & 1 \\
			1 & 1 & \  \\
		};
		
		\draw[thick,opacity=.3] (B-2-1.center) -- (B-2-3.center) -- (B-3-3.center) -- (B-3-2.center) -- (B-4-2.center) -- (B-4-1.center) -- (B-2-1.center);  
		
		\node at (4.2,0) {and};
		
		\matrix[matrix of math nodes, left delimiter={(}, right delimiter={)},label=left:{$B=$~~~}] (C) at (7,0) {
			1 & 1 \\
			1 & 1 \\
			\textcolor{white}{1} & \ \\
			\textcolor{white}{1} & \ \\
		};
		\draw[thick,opacity=.3] (C-1-1.center) -- (C-1-2.center) -- (C-2-2.center) -- (C-2-1.center) -- (C-1-1.center);

		\end{tikzpicture}
	\end{center}
\caption{Decomposing $A$ into a copy of $K_{2,2}$ and an $x$-monotone cycle of shorter length.}
\label{image:decomposing}
\end{figure}
\end{proof}


\begin{lemma}\label{lemma:denseorbalanced}
	Let $k=2^{8}r^{2}$ and $c\geq 8rs\binom{k}{r}$. Let $M$ be an $n\times n$ sized matrix such that $k$ divides $n$ and $w(M)\geq cn^{\frac{3}{2}}$. Then either
	\begin{description}
		\item[(i)] $M$ contains an $\frac{n}{k}\times\frac{n}{k}$ sized matrix $N$ such that $w(N)\geq 2c(\frac{n}{k})^{\frac{3}{2}}$, or
		\item[(ii)] $M$ contains an $\frac{nr}{k}\times m$ sized $r$-balanced matrix $N$, where $w(N)\geq rs \sqrt{m} (\frac{nr}{k})$.	
	\end{description}
\end{lemma}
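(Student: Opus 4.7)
The plan is to partition $M$ into the $k$ horizontal blocks $B_{1},\dots,B_{k}$ of size $\frac{n}{k}\times n$, denote by $d_{i}(j)$ the number of $1$-entries of $M$ in column $j$ that lie inside $B_{i}$, and for each $r$-subset $S\subseteq[k]$ introduce the quantity $W(S)=\sum_{j=1}^{n}\min_{i\in S}d_{i}(j)$. I then execute the following dichotomy: either some $S$ satisfies $W(S)\ge rsn^{3/2}/k$, in which case case $(ii)$ follows immediately (take the $r$ blocks of $S$, all $m=n$ columns, and delete enough $1$-entries so that in every column $j$ each block of $S$ has exactly $\min_{i\in S}d_{i}(j)$ ones; the resulting $r$-balanced submatrix has weight $rW(S)\ge rs\sqrt{n}\cdot(nr/k)$), or else I produce the dense submatrix required by case $(i)$.

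In the ``else'' case I sum the bound over all $\binom{k}{r}$ subsets and exchange summations. For each column $j$, writing its sorted block-counts as $d_{(1)}(j)\ge\cdots\ge d_{(k)}(j)$, setting $T_{j}=\sum_{i}d_{i}(j)$, and letting $P(j)=\sum_{\ell=1}^{r-1}d_{(\ell)}(j)$ denote the weight of its top $r-1$ blocks, I observe that for every $\ell\in\{r,\dots,k\}$ the $r$-subset $S_{\ell}$ consisting of the top $r-1$ blocks of column $j$ together with the rank-$\ell$ block satisfies $\min_{i\in S_{\ell}}d_{i}(j)=d_{(\ell)}(j)$; summing over these $S_{\ell}$ gives $\sum_{S}\min_{i\in S}d_{i}(j)\ge\sum_{\ell=r}^{k}d_{(\ell)}(j)=T_{j}-P(j)$. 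Combined with the hypothesis $c\ge 8rs\binom{k}{r}$ (which is precisely what is needed to absorb the $\binom{k}{r}rs/k$ factor), this yields $\sum_{j}(T_{j}-P(j))\le cn^{3/2}/8$, and since $\sum_{j}T_{j}=w(M)\ge cn^{3/2}$, I conclude $\sum_{j}P(j)\ge 7cn^{3/2}/8$; most of the weight of $M$ is concentrated in the top $r-1$ blocks of each column.

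To extract case $(i)$ from this concentration, I would call a column $j$ \emph{heavy} if $P(j)\ge c\sqrt{n}/4$ and \emph{light} otherwise. Light columns contribute at most $cn^{3/2}/4$ to $\sum_{j}P(j)$, so heavy columns still contribute at least $5cn^{3/2}/8$, and for every heavy $j$ the top block satisfies $d_{(1)}(j)\ge P(j)/(r-1)\ge c\sqrt{n}/(4(r-1))$. Assigning each heavy $j$ to the index $i^{*}(j)$ of its top block and pigeonholing over $[k]$ produces a single block $B_{i_{0}}$ and a set $J_{0}$ of heavy columns with $d_{i_{0}}(j)\ge c\sqrt{n}/(4(r-1))$ for every $j\in J_{0}$ and $\sum_{j\in J_{0}}d_{i_{0}}(j)\ge 5cn^{3/2}/(8k(r-1))$. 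Taking any $\min(|J_{0}|,n/k)$ columns of $J_{0}$ within $B_{i_{0}}$ (padded arbitrarily if $|J_{0}|<n/k$) produces an $\frac{n}{k}\times\frac{n}{k}$ submatrix of weight at least $cn^{3/2}/(4k(r-1))$: when $|J_{0}|\ge n/k$ every one of the $n/k$ chosen columns individually carries weight $\ge c\sqrt{n}/(4(r-1))$ in $B_{i_{0}}$, while when $|J_{0}|<n/k$ the full sum $\ge 5cn^{3/2}/(8k(r-1))$ is preserved.

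The main obstacle is verifying that the weight $cn^{3/2}/(4k(r-1))$ from the last step exceeds the target $2c(n/k)^{3/2}$ of case $(i)$; this reduces to the inequality $\sqrt{k}\ge 8(r-1)$, and the specific value $k=2^{8}r^{2}$ was chosen precisely so that both this inequality and the earlier $c\ge 8rs\binom{k}{r}$ condition hold with room to spare.
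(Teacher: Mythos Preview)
Your argument is correct. Both your proof and the paper's follow the same high-level dichotomy---either the column weights are spread across at least $r$ horizontal blocks (yielding the $r$-balanced submatrix of case~(ii)) or they are concentrated in fewer than $r$ blocks per column (yielding the dense $\frac{n}{k}\times\frac{n}{k}$ submatrix of case~(i))---but the technical executions differ. The paper introduces a threshold parameter $\alpha=\frac{1}{2k}$, first discards columns with fewer than $\frac{c\sqrt{n}}{2}$ ones, then classifies each column-block as \emph{sparse} or \emph{dense} relative to $\alpha s_c$, and calls a column \emph{imbalanced} if it has fewer than $r$ dense column-blocks; the two cases then split on whether at least half of the remaining weight lies in imbalanced columns. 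Your version avoids the auxiliary threshold entirely by working directly with $W(S)=\sum_j\min_{i\in S}d_i(j)$ and the order-statistic inequality $\sum_S\min_{i\in S}d_i(j)\ge T_j-P(j)$, which cleanly isolates the mass carried by the top $r-1$ blocks of each column. This makes the ``spread'' case~(ii) fall out in one line (with $m=n$) and turns the concentration case into a single averaging-plus-pigeonhole step. The paper's route, on the other hand, makes the role of the light-column deletion more visible and tracks the parameter $m$ (the number of surviving columns) explicitly in case~(ii), which matches how the lemma is later applied. Both approaches use the hypotheses $k=2^8r^2$ and $c\ge 8rs\binom{k}{r}$ at exactly the same two junctures you identify.
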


\begin{proof}[Proof of Lemma \ref{lemma:denseorbalanced}]
	Let $\alpha=\frac{1}{2k}$. Say that a column of $M$ is \emph{light} if it contains less than $\frac{cn^{\frac{1}{2}}}{2}$ $1$-entries. By deleting every light column, we get an $n\times m$ matrix $M'$ with $w(M')\geq \frac{cn^{\frac{3}{2}}}{2}$. For $i\in [k]$, let $M_{i}$ denote the submatrix of $M'$ formed by the rows indexed by $(i-1)\frac{n}{k}+1,\dots,i\frac{n}{k}$, and call $M_{i}$ a block. For $c\in [m]$ and $i\in [k]$, let $C_{c,i}$ denote the intersection of column $c$ with $M_{i}$, and let $s_{c}$ denote the number of $1$-entries in the column $c$. Call $C_{c,i}$ a column-block.
	
	Say that $(c,i)\in [m]\times [k]$ is \emph{sparse} if $C_{c,i}$ contains less than $\alpha s_{c}$ $1$-entries, otherwise it is \emph{dense}. Say that a column $c$ is \emph{imbalanced} if the number of indices $i\in [k]$ for which $C_{c,i}$ is dense is less than $r$, otherwise it is \emph{balanced}. If $c$ is imbalanced, then the number of $1$-entries in column $c$ contained in sparse column-blocks is at most $k\alpha s_{c}$, hence the number of $1$-entries contained in dense column-blocks is at least $(1-k\alpha)s_{c}$, which means that one of the  at most $r-1$ dense column-blocks contains at least $\frac{(1-k\alpha)s_{c}}{r}$ $1$-entries. Call such a column-block \emph{heavy}. Consider two cases.
	
	\textbf{Case 1.} The number of $1$-entries of $M'$  contained in imbalanced columns is at least $\frac{w(M')}{2}$. 
	
	In this case, we show that (i) holds. From each imbalanced column pick a heavy column-block. These heavy column blocks contain at least $\frac{w(M')(1-k\alpha)}{2r}$ $1$-entries. Therefore, there exists $i\in [r]$ such that the number of $1$-entries covered by heavy column-blocks in $M_{i}$ is at least $\frac{w(M')(1-k\alpha)}{2kr}$. Create the $\frac{n}{k}\times \frac{n}{k}$ sized matrix $N$ as follows: if there are less than $\frac{n}{k}$ heavy column-blocks in $M_{i}$, then let $N$ be the matrix formed by these column-blocks and some extra columns (so that the total number of columns is $\frac{n}{k}$); if there are at least $\frac{n}{k}$ heavy column-blocks in $M_{i}$, then pick any $\frac{n}{k}$ of them and let $N$ be their union. In the first case, we have $w(N)\geq \frac{w(M')(1-k\alpha)}{2kr}\geq \frac{cn^{\frac{3}{2}}(1-k\alpha)}{4kr}$. In the second case, we use that each column contains at least $\frac{cn^{\frac{1}{2}}}{2}$ $1$-entries, so each heavy column-block contains at least $\frac{cn^{\frac{1}{2}}(1-k\alpha)}{2r}$ $1$-entries. But then 
	$$w(N)\geq \frac{n}{k}\cdot\frac{cn^{\frac{1}{2}}(1-k\alpha)}{2r}=\frac{cn^{\frac{3}{2}}(1-k\alpha)}{2kr}.$$ Thus, in both cases, we have 
	$$w(N)\geq \frac{cn^{\frac{3}{2}}(1-k\alpha)}{4kr}=\frac{(1-k\alpha)ck^{\frac{1}{2}}}{4r}\left(\frac{n}{k}\right)^{\frac{3}{2}}.$$
	By the choice of $k$ and $\alpha$, this yields $w(N)\geq 2c(\frac{n}{k})^{\frac{3}{2}}$. 
	
	\textbf{Case 2.} The number of $1$-entries of $M'$ contained in balanced columns is at least $\frac{w(M')}{2}$. 
	
	In this case, we show that (ii) holds. For each balanced column $c\in [m]$,  let $I_{c}$ be a set of $r$ indices $i\in [k]$ such that $C_{c,i}$ is dense. In total, there are $\binom{k}{r}$ possible $r$ element sets in $[k]$, so there exists an $r$-element subset $I$ of $[k]$ and a set $C$ of columns such that for each $i\in I$ and $c\in C$, the column-block $C_{c,i}$ contains at least $\alpha s_{c}$ $1$-entries and $\sum_{c\in C}s_{c}\geq \frac{w(M')}{2\binom{k}{r}}$. Then, by changing some $1$-entries to $0$, if necessary, we find an $\frac{nr}{k}\times m$ sized $r$-balanced matrix $N$ in $M'$ such that
	$$w(N)\geq \frac{\alpha w(M')r}{2\binom{k}{r}}\geq \frac{\alpha cn^{\frac{3}{2}}r}{4\binom{k}{r}}\geq rs \sqrt{m} \left(\frac{nr}{k}\right).$$
\end{proof}
	
Combining the above two lemmas, we finish the proof of Theorem \ref{thm:cycle}.

\begin{proof}[Proof of Theorem \ref{thm:cycle}]
	Let $k=2^{8}r^{2}$ and $c\geq 8 rs\binom{k}{r}$. First, suppose that $n=k^{t}$, where $t$ is a positive integer. We prove that if $M$ is an $n\times n$ sized matrix such that $w(M)\geq cn^{\frac{3}{2}}$, then $M$ contains $A$. Suppose for a contradiction that $M$ does not contain $A$. Then we show that there exists a  sequence of zero-one matrices $M=M_{0},M_{1},\dots, M_{t}$ such that $M_{i-1}$ contains $M_{i}$, the size of $M_{i}$ is $\frac{n}{k^{i}}\times \frac{n}{k^{i}}$, and $w(M_{i})\geq 2^{i}c(\frac{n}{k^{i}})^{\frac{3}{2}}$ for $i=1,2,\dots$. However, this is clearly a contradiction as this yields that  $M_{t}$ is a $1\times 1$ sized matrix of weight at least $2^{t}c$, which is impossible.
	
	Indeed, if $M_{i-1}$ is already defined satisfying the above properties, define $M_{i}$ as follows.  We can apply Lemma \ref{lemma:denseorbalanced} to $M_{i-1}$ to get that either $M_{i-1}$ contains a matrix $M_{i}$ with the desired properties, or $M_{i-1}$ contains an $\frac{nr}{k^{i}}\times m$ sized $r$-balanced matrix $N$ with $w(N)\geq rs \sqrt{m} (\frac{nr}{k^{i}})$. But the latter case is impossible, because $N$ contains $A$ by Lemma \ref{lemma:balanced}.

	The only case remaining is when $n$ is not a power of $k$. Let $n_{0}$ be the smallest power of $k$ larger than $n$. Then clearly $n_{0}<kn$ and 
	$$\ex(n,A)\leq \ex(n_{0},A)\leq cn_{0}^{\frac{3}{2}}<ck^{\frac{3}{2}}n^{\frac{3}{2}},$$
	finishing the proof.
\end{proof}

\section{Concluding remarks}\label{sect:remarks}

As we mentioned in the introduction, the following strengthening of Theorem \ref{thm:mainthm} and Theorem \ref{thm:symmetric} remains open.

\begin{conjecture}\label{conj:maxdegree}
	Let $t\geq 2$ be a positive integer. Let $A$ be a zero-one matrix such that every row of $A$ contains at most $t$ $1$-entries. Then
	$$\ex(n,A)=n^{2-\frac{1}{t}+o(1)}.$$
\end{conjecture}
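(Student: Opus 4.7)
The plan is to extend the density-increment plus dependent random-choice framework developed for Theorem \ref{thm:mainthm} to the case where $A$ has at most $t$ ones per row but is not column-$t$-partite, trying to match the exponent of the Füredi/Alon--Krivelevich--Sudakov bound from the unordered analogue. The skeleton of the argument should mirror Theorem \ref{thm:mainthm}: fix $A$ of size $r\times s$ with row-weight at most $t$, assume for contradiction an $n\times n$ matrix $M$ avoiding $A$ with $w(M)\geq n^{2-1/t+\epsilon}$, use Lemma \ref{lemma:counting} to get many copies of $K_{t,t}$ in $M$, and construct a sequence $M=M_{0}\supseteq M_{1}\supseteq\dots$ of submatrices in which the density of $K_{u,t}$ copies follows a schedule $\lambda_{u}$ analogous to (and, crucially, more favourable than) the one used in Section \ref{sect:mainthm}.

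The step that needs to be reworked is the analogue of Lemma \ref{lemma:densityincrement}: after partitioning $M$ into $k$ horizontal blocks, show that either a constant fraction of the $K_{u,t}$ copies are narrow (giving the density increment and allowing iteration), or $M$ already contains $A$. In the column-$t$-partite case, Claim \ref{claim:boundingHeavyCopies} embeds $A$ by partitioning the columns of $A$ into $t$ intervals and, for each heavy $t$-set of columns of $M$ with label $L$, choosing one interval-worth of ones per row of $A$; the interval structure of $A$ decouples the $t$ choices. The replacement I would attempt is a \emph{product-type dependent random choice}: find a set $U$ of columns of $M$ such that for every horizontal block $b$ and every $t$-subset $T\subseteq U$, the codegree of $T$ inside $b$ is at least some prescribed $D$. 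Granting such a $U$, one can embed $A$ row by row, assigning each row of $A$ to a different horizontal block and using the within-block codegree to realise the (at most $t$) ones of that row in the correct order. A weaker, averaging variant would pick the row-to-block assignment randomly and count the expected number of successful embeddings.

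The main obstacle is obtaining the above product-type codegree lower bound without paying a factor that ruins the $1/t$ exponent. In Claim \ref{claim:boundingHeavyCopies} the interval structure of $A$ is what permits bounding the number of bad $t$-column sets by the $t$-partite hypergraph Zarankiewicz-type argument of Lemma \ref{lemma:t-partite}; without it, two rows of $A$ that share a pair of columns can force one row of $M$ to be used twice, and the bad-sets count blows up. Compensating for this seems to require either a structural reduction showing that any row-weight-$t$ matrix is subcontained in an $O(1)\times t$-partite one (which looks hard, and by itself only reduces the problem to Theorem \ref{thm:symmetric}), or a new dependent random-choice gadget delivering \emph{balanced} codegrees across an ordered partition of the rows. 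I expect this last step to be the genuine obstruction, and it is where I would expect the difficulty to lie, consistent with the fact that Conjecture \ref{conj:maxdegree} is currently open.
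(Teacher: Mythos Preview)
The statement you are addressing is Conjecture~\ref{conj:maxdegree}, and the paper offers no proof of it: it is explicitly listed as an open problem in Section~\ref{sect:remarks}. So there is no ``paper's own proof'' to compare against.

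Your proposal is not a proof either, and you say as much in the final paragraph. What you have written is an accurate diagnosis of where the argument of Theorem~\ref{thm:mainthm} breaks down when the column-$t$-partite hypothesis is removed: the interval structure of $A$ is exactly what makes Claim~\ref{claim:boundingHeavyCopies} go through, because it lets one embed the rows of $A$ into distinct horizontal blocks while guaranteeing that the $t$ column-slots for each row land in the right order \emph{independently} of one another. Without that decoupling, the Zarankiewicz-type bound from Lemma~\ref{lemma:t-partite} on heavy $t$-sets no longer controls the embedding, and the ``product-type dependent random choice'' you propose as a substitute is precisely the missing ingredient that nobody currently knows how to supply. Your observation that two rows of $A$ sharing a column pair can force a single row of $M$ to be reused is the concrete obstruction.

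In short: the gap you identify is real, you identify it correctly, and it is the same gap the authors implicitly leave open by stating the result as a conjecture rather than a theorem. This is a reasonable research outline, but it is not a proof, and no proof is known.
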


It would be already interesting to decide whether Conjecture \ref{conj:maxdegree} holds for every cycle $A$. An interesting open problem is to determine the order of magnitude of $\ex(n,A)$, where $A$ corresponds to a cycle of length $6$. There are six zero-one matrices corresponding to the $6$-cycle, see Figure \ref{image:6cycles}.

Each of these matrices is $x$-monotone, so we have $\ex(n,A)=O(n^{\frac{3}{2}})$ if $A$ corresponds to a cycle of length $6$. On the other hand, we have $\ex(n,A)=\Omega(n^{\frac{4}{3}})$ for all such matrices $A$ by a classical construction of Benson \cite{B66}. It would be interesting to close the gap between the lower and the upper bound. The following conjecture was also investigated in \cite{GyKMTTV18}.
 
\begin{conjecture}
	Let $A$ be a zero-one matrix that corresponds to a cycle of length $6$. Then $$\ex(n,A)=\Theta(n^{\frac{4}{3}}).$$
\end{conjecture}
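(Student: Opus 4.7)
The plan is to prove $\ex(n,A) = O(n^{4/3})$ for each of the six matrices $A$ corresponding to an ordered $6$-cycle, sharpening the $O(n^{3/2})$ bound inherited from Theorem \ref{thm:cycle} to match Benson's lower bound. The model to adapt is the Bondy--Simonovits argument for unordered $C_6$: count walks of length $3$ in the bipartite graph associated to $M$; if $M$ is $C_6$-free, the number of such walks is $O(n^3)$ via Cauchy--Schwarz on common-neighbor sets, while convexity gives a lower bound of order $w(M)^3/n^2$, yielding $w(M) = O(n^{4/3})$.

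First I would enumerate the six ordered $6$-cycle matrices explicitly (they are precisely the $x$-monotone closed polygonal curves on three rows and three columns), and for each fix a canonical decomposition of the cycle into two ordered paths of length three sharing endpoints. This gives, for each ordering type $A$, a precise notion of ``$A$-typed walk of length three'' in $M$: an ordered quadruple of $1$-entries whose rows and columns satisfy the interleaving pattern dictated by $A$. Two such walks sharing endpoints will glue into an ordered copy of $A$ provided the interior ordering matches, so in an $A$-free matrix the multiplicity of $A$-typed walks between any prescribed ordered pair of endpoints is bounded by a constant $C(A)$.

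The main step, assuming the multiplicity bound, is to lower-bound the total number of $A$-typed walks of length $3$ by $\Omega(w(M)^3/n^2)$ via a supersaturation / convexity argument in the spirit of Lemma \ref{lemma:counting}, summed over one of the six interleaving types. Combined with the upper bound $O(n^3)$, this produces $w(M) = O(n^{4/3})$. A density-increment dichotomy in the style of Section \ref{sect:thm1}, splitting $M$ into horizontal and vertical blocks to separate the six types, can be used to reduce to the type where the lower bound is genuinely attained.

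The main obstacle is the multiplicity bound itself. Two ordered paths of length three with the same endpoints do not always close up into an ordered $6$-cycle of the \emph{prescribed} type $A$: depending on how the interior vertices of the two paths interleave, the gluing may realize one of the other five orderings. This is exactly why the conjecture remains open. A possible workaround is to prove the six inequalities simultaneously: if any single $\ex(n,A_i)$ exceeded $n^{4/3+\epsilon}$, a pigeon-hole over interleaving types would force a quadratic surplus of walks of some other type $A_j$, which could then be propagated back via the density-increment machinery to produce a forbidden copy of $A_i$. The most delicate case is the fully ``zigzag'' ordering, where the gluing is automatically order-preserving and the question essentially reduces to the unordered $\ex(n,C_6)$ problem --- so the proposed technique cannot beat the $n^{4/3}$ barrier there, but it also need not.
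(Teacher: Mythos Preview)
This statement is a \emph{conjecture} in the paper, not a theorem: the paper offers no proof, only the upper bound $O(n^{3/2})$ from Theorem~\ref{thm:cycle} and the lower bound $\Omega(n^{4/3})$ from Benson's construction, and explicitly flags closing the gap as open. So there is no paper proof to compare your proposal against.

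As for the proposal itself, you have correctly located the obstruction and then talked yourself past it without actually resolving it. The Bondy--Simonovits walk-counting argument for unordered $C_6$ works because \emph{any} two internally disjoint paths of length three with the same endpoints form a $C_6$. In the ordered setting, two $A$-typed walks with the same ordered endpoints can glue into any of the six ordered $6$-cycles depending on how the two interior vertices interleave, and forbidding only the pattern $A$ gives no control over the other five. Your proposed workaround --- pigeonhole over the six types and ``propagate back via the density-increment machinery'' --- does not survive scrutiny: if $M$ avoids $A_i$ but the surplus of length-$3$ walks closes up into copies of $A_j$ with $j\neq i$, that is no contradiction at all, and nothing in the block-decomposition arguments of Sections~\ref{sect:thm1}--\ref{sect:mainthm} converts copies of $A_j$ into copies of $A_i$. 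The final remark about the ``fully zigzag'' ordering reducing to the unordered problem is also not right: the unordered bound $\ex(n,C_6)=O(n^{4/3})$ does not automatically transfer to any single ordered pattern, since an $A$-free matrix may contain many copies of $C_6$ in the other five orderings. In short, the plan reproduces the known difficulty rather than overcoming it, and the conjecture remains open.
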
  

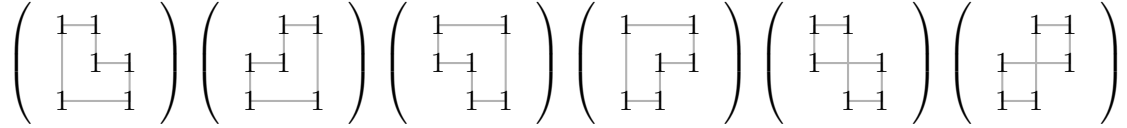
\begin{figure}
	\begin{center}
		\begin{tikzpicture}
		\matrix[matrix of math nodes, left delimiter={(}, right delimiter={)}] (A) at (-2.5,0) {
			1 & 1 &   \\
			& 1 & 1 \\
			1 &   & 1 \\
		};
		\draw[thick,opacity=.3] (A-1-1.center) -- (A-1-2.center) -- (A-2-2.center) -- (A-2-3.center) -- (A-3-3.center) -- (A-3-1.center) -- (A-1-1.center);
		
		\matrix[matrix of math nodes, left delimiter={(}, right delimiter={)}] (B) at (0,0) {
			& 1 & 1 \\
			1	& 1 &   \\
			1 &   & 1 \\
		};
		\draw[thick,opacity=.3] (B-2-1.center) -- (B-2-2.center) -- (B-1-2.center) -- (B-1-3.center) -- (B-3-3.center) -- (B-3-1.center) -- (B-2-1.center);
		
		\matrix[matrix of math nodes, left delimiter={(}, right delimiter={)}] (C) at (2.5,0) {
			1 &   & 1 \\
			1 & 1 &   \\
			& 1 & 1 \\
		};
		\draw[thick,opacity=.3] (C-1-1.center) -- (C-1-3.center) -- (C-3-3.center) -- (C-3-2.center) -- (C-2-2.center) -- (C-2-1.center) -- (C-1-1.center);
		
		\matrix[matrix of math nodes, left delimiter={(}, right delimiter={)}] (D) at (5,0) {
			1 &   & 1 \\
			& 1 & 1 \\
			1 & 1 &   \\
		};
		\draw[thick,opacity=.3] (D-1-1.center) -- (D-1-3.center) -- (D-2-3.center) -- (D-2-2.center) -- (D-3-2.center) -- (D-3-1.center) -- (D-1-1.center);
		
		\matrix[matrix of math nodes, left delimiter={(}, right delimiter={)}] (E) at (7.5,0) {
			1 & 1 &   \\
			1 &   & 1 \\
			& 1 & 1 \\
		};
		\draw[thick,opacity=.3] (E-1-1.center) -- (E-1-2.center) -- (E-3-2.center) -- (E-3-3.center) -- (E-2-3.center) -- (E-2-1.center) -- (E-1-1.center);
		
		\matrix[matrix of math nodes, left delimiter={(}, right delimiter={)}] (F) at (10,0) {
			& 1 & 1 \\
			1 &   & 1 \\
			1 & 1 &   \\
		};
		\draw[thick,opacity=.3] (F-1-2.center) -- (F-1-3.center) -- (F-2-3.center) -- (F-2-1.center) -- (F-3-1.center) -- (F-3-2.center) -- (F-1-2.center);
		
		\end{tikzpicture}
	\end{center}
	\caption{The six matrices corresponding to cycles of length 6.}
	\label{image:6cycles}
\end{figure}

	
\section*{Acknowledgment}
We would like to thank J\'anos Pach and G\'abor Tardos for valuable discussions. The contents of Section \ref{sect:cycles} also appear in the master thesis of Francesco Calignano from EPFL. These results originate from the second author of this paper, who was also the supervisor of this master thesis.

%

%
	
\end{document}